\documentclass[12pt]{article}

\usepackage{amsthm,amsmath,amssymb,mathtools}
\usepackage{ascmac}
\usepackage{latexsym}
\usepackage{graphicx}
\usepackage{url}
\usepackage{hyperref}
\usepackage{xcolor}
\usepackage{bm}
\usepackage{comment}
\usepackage{statex}
\usepackage{diagbox}
\usepackage{multirow}
\usepackage{chngpage}
\usepackage{setspace} % for spacing in reference
% \usepackage{mathabx} % for widecheck

% \usepackage[color]{showkeys}
% \definecolor{refkey}{rgb}{0.78, 0.85, 0.82} %showkeys 
% \definecolor{labelkey}{rgb}{0.78, 0.85, 0.82} %showkeys
% \definecolor{refkey}{rgb}{0.3, 0.6, 0.4} %showkeys 
% \definecolor{labelkey}{rgb}{0.3, 0.6, 0.4} %showkeys

% \usepackage{lineno}
\usepackage{array}
\usepackage[width=0.9\textwidth]{caption}

\newtheorem*{theorem*}{Theorem}
\newtheorem{theorem}{Theorem}

\newtheorem*{prop*}{Proposition}
\newtheorem{rema}{Remark}
\newtheorem*{rema*}{Remark}
\newtheorem{lemma}[theorem]{Lemma}

\def\Real{\mbox{$\mathbb{R}$}}

\def\SymMat{\mbox{$\mathbb{S}$}}
\def\SymT{\mbox{$\mathbb{T}$}}

\newcolumntype{L}{>{\centering\arraybackslash}m{2.2cm}}

\def\0{\boldsymbol 0}
\def\1{\boldsymbol 1}
\def\2{\boldsymbol 2}
\def\3{\boldsymbol 3}
\def\4{\boldsymbol 4}
\def\5{\boldsymbol 5}
\def\6{\boldsymbol 6}
\def\7{\boldsymbol 7}
\def\8{\boldsymbol 8}
\def\9{\boldsymbol 9}

\def\b{\boldsymbol b}

\def\c{\boldsymbol c}

\def\e{\boldsymbol e}

 % to keep original \l

\def\q{\boldsymbol q}

 % to keep original \l

\def\v{\boldsymbol v}

\def\x{\boldsymbol x}
\def\y{\boldsymbol y}

\def\A{\boldsymbol A}

\def\H{\boldsymbol H}

\def\O{\boldsymbol O}
\def\P{\boldsymbol P}
\def\Q{\boldsymbol Q}

\def\W{\boldsymbol W}
\def\X{\boldsymbol X}
\def\Y{\boldsymbol Y}

\usepackage{tikz}
\usetikzlibrary{shapes,arrows}
% \usetikzlibrary{arrows.meta}
% % needed for BB
\usetikzlibrary{calc}

\usepackage{mathtools}
\mathtoolsset{showonlyrefs=true}

\begin{document}
%===================================

\title{ \Large Exploiting Aggregate Sparsity in Second Order Cone Relaxations for Quadratic Constrained Quadratic Programming Problems}

\author{
%\normalsize 
Heejune Sheen\thanks{School of Industrial and Systems Engineering, Georgia Institute of Technology, Atlanta, GA 30332, USA        
             ({\tt brianshn1@gmail.com}). This research was conducted while this author was a research student at the
		Department of  Mathematical and Computing Science,
            Tokyo Institute of Technology. % 2-12-1 Oh-Okayama, Meguro-ku, Tokyo 152-8552, Japan.        
              }   \and 
             %\normalsize
Makoto Yamashita\thanks{Department of  Mathematical and Computing Science,
            Tokyo Institute of Technology, 2-12-1 Oh-Okayama, Meguro-ku, Tokyo 152-8552, Japan        
             ({\tt Makoto.Yamashita@c.titech.ac.jp}).
	This research was partially supported by JSPS KAKENHI (Grant number: 18K11176).
 	}
}
\date{\normalsize November, 2019}

% \linenumbers % for lineno.sty

\maketitle 

%=========abstract========
\vspace*{-1cm}

\begin{abstract}
\noindent
Among many approaches to increase the computational efficiency of semidefinite programming (SDP) relaxation for quadratic constrained quadratic programming problems (QCQPs), exploiting the aggregate sparsity of the data matrices in the SDP by Fukuda et al.~(2001) and second-order cone programming (SOCP) relaxation have been popular. In this paper, we exploit the aggregate sparsity of SOCP relaxation of QCQPs. Specifically, we prove that exploiting the aggregate sparsity reduces the number of second-order cones in the SOCP relaxation, and that we can simplify the matrix completion procedure by Fukuda et al.~in both primal and dual of the SOCP relaxation problem without losing the max-determinant property. For numerical experiments, QCQPs from the lattice graph and pooling problem are tested as their SOCP relaxations provide the same optimal value as the SDP relaxations. We demonstrate that exploiting the aggregate sparsity improves the computational efficiency of the SOCP relaxation for the same objective value as the SDP relaxation, thus much larger problems can be handled by the proposed SOCP relaxation than the SDP relaxation.
\end{abstract}

\begin{comment}
\begin{abstract}
\noindent
%Semidefinite programming (SDP) relaxation of quadratic constrained quadratic programming problems (QCQPs)
%is considered to be an effective approach for
%obtaining a good approximation to the objective value of the problems. % (QCQPs).
Among many approaches to increase the computational efficiency of semidefinite programming (SDP) relaxation 
for quadratic constrained quadratic programming programming problems (QCQPs),
% has been  studied by
exploiting   the aggregate sparsity
of the data matrices in the SDP by Fukuda et al.~(2001) and  %further relaxations of the SDP relaxation,
second-order cone programming (SOCP) relaxation have been popular. 
%For some class of QCQPs, SOCP relaxations can provide the same optimal value as the SDP relaxation.
In this paper, we exploit the aggregate sparsity of SOCP relaxation of QCQPs.
%the aggregate sparsity has not been examined well before in the context of SOCP (second-order cone programming) relaxation for QCQPs.
%The SOCP relaxation is usually weaker than the SDP relaxation, but
%it is known that when QCQPs satifsy certain condition, both the SOCP and SDP relaxation give exact optimal solutions of QCQPs.
%For such QCQPs, 
%the computation cost of the SOCP relaxation is more attractive than that of the SDP relaxation.
%In this paper, 
Specifically, we prove that exploiting the aggregate sparsity reduces the number of second-order cones in the SOCP relaxation,
and that the matrix completion procedure by Fukuda et al.~can be simplified in both primal and dual of the SOCP relaxation problem.  We  also show that the completed matrix in the proposed method has the maximum determinant among all completed %possible
matrices, as in the matrix completion for %in a similar way to
  the SDP relaxation. 
For numerical experiments, QCQPs from the lattice graph %problem 
and  pooling problem are tested as their
SOCP relaxations provide the same optimal value as the SDP relaxations.
We demonstrate that exploiting the aggregate sparsity of the SOCP relaxation %that exploits the aggregate sparsity
 improves the computational efficiency of the SOCP relaxation for  the same objective value as 
 the SDP relaxation, thus %makes it possible to solve 
%This is more effective for 
% large  problems that the SDP relaxation cannot handle can be solved.
%the proposed sparse SOCP relaxation can handle 	
much larger problems can be handled by the proposed SOCP relaxation than the SDP relaxation.
\end{abstract}
\end{comment}

\noindent
{\bf Key words. } 
Quadratic constrained quadratic programming, semidefinite programming, second-order cone programming,  aggregate sparsity, chordal sparsity.
%\vspace{0.5cm}

\noindent
{\bf AMS Classification. } 
90C20,  	%Quadratic programming
90C22,  	%Semidefinite programming
90C25, 	%Convex programming
90C26.  	%Nonconvex programming, global optimization

%=============Section1 Introduction ================
\section{Introduction}
Quadratically constrained quadratic programming problems (QCQPs) represent an important class of optimization problems
in both theory and practice.
% from theoretical and practical viewpoints. 
A variety of problems arising from engineering and combinatorial applications can be formulated as QCQPs, for example, 
quadratic assignment problem~\cite{Anstreicher2003}, radar detection~\cite{de2010fractional}, and graph theory~\cite{motzkin1965maxima}. More applications can be  found in~\cite{Bao2011}.
If QCQPs are convex, many efficient algorithms exist to find their solutions \cite{huang2006smoothing,nemirovskii1996extension}. Nonconvex QCQPs are, however, known as NP-hard in general~\cite{d2003relaxations}.

%2. citation of SDR 3. citation of different kinds of SOCP relaxations
%An well-studied approach 
Nonconvex QCQPs have been studied by relaxation methods via lifting and convexification \cite{boyd2004convex}, 
most notably, semidefinite programming (SDP) relaxation. SDP relaxation has been popular as
it can obtain tight approximate optimal values of QCQPs. In fact, SDP relaxation has been applied to a broad range of problems such as the maxcut problems \cite{goemans1995improved}, sensor network localization \cite{biswas2006semidefinite}, optimal contribution selection
\cite{safarina2019conic, yamashita2018efficient}, and the pooling problem \cite{kimizuka2018solving, nishi2010semidefinite}.

Solving the SDP relaxation of large QCQPs can be very time-consuming and
obtaining  %sometimes it is hard to obtain 
an approximate optimal value with accuracy  is often difficult  due to numerical instability~\cite{safarina2019conic}. It is particularly true when the primal-dual interior-point methods \cite{mosek2015mosek, sturm1999using}
are used to solve the SDP relaxation. As a result, various methods have been proposed to alleviate the difficulties. 
%In particular, we
The chordal sparsity exploitation proposed by 
Fukuda et al.~\cite{fukuda2001exploiting} for SDP problems is regarded as a  systematic method that effectively utilizes the
structure of the data matrices.
% The chordal sparsity in~\cite{fukuda2001exploiting} was based on the theorems in \cite{grone1984positive}, and is strongly related  with a chordal graph and a matrix completion. 
%The approach 
In~\cite{fukuda2001exploiting},
 the variable matrix  was decomposed into small sub-matrices, each of which was
 associated with the maximal cliques of the chordal graph of the SDP.  
To relate the resulting sub-matrices for the equivalence to the original SDP, 
 additional equality constraints were added to the SDP problem with the decomposed sub-matrices.
After solving the SDP with the sub-matrices and equality constraints, 
a completion procedure to patch the sub-matrices was performed 
to recover the original variable matrix as the final solution. Fukuda et al.~showed 
that the completion procedure results in a matrix with the maximum determinant among all possible completed matrices.
%In view of   the computation,
From the computational perspective,
the computational gain by exploiting the chordal sparsity in SDPs is clear  only %when 
when the resulting SDPs have small sizes of the sub-matrices and  moderate numbers of  %are small and the number of 
additional equality constraints. % is mild.

The chordal sparsity in SDP relaxation has been studied and implemented in many literature and softwares.
The chordal sparsity was studied from various angles in ~\cite{vandenberghe2015chordal} by
  Vandenberghe and Andersen. % describes the subject from various angles. 
Kim et al.~\cite{kim2011exploiting} introduced a Matlab software  package called
SparseCoLO~\cite{fujisawa2009user} that exploits the chordal sparsity of matrix inequalities.
%   in a Matlab  software package called
%SparseCoLO~\cite{fujisawa2009user}.
SDPA-C~\cite{yamashita2015fast} is  implementation of a parallel approach for SDPs using the chordal sparsity.
Mason et al.~\cite{mason2014chordal} applied the chordal sparsity to the Lyapunov equation.
In contrast,
 the chordal sparsity has not been studied in the context of second-order cone programming (SOCP) relaxation.

For some classes of QCQPs, the SOCP relaxation provides the same  optimal value as the SDP relaxation, though it is a weaker relaxation than the SDP relaxation in general. Kim and Kojima~\cite{kim2003exact} proved that nonconvex QCQPs with
non-positive off-diagonal elements can be exactly solved by the SDP or SOCP relaxations.
% They also reported that the SOCP relaxation provides the same optimal solution faster than the SDP relaxation. 
More recently, for QCQPs with zero diagonal elements in the data matrices, %being zeros, 
Kimizuka et al.~\cite{kimizuka2018solving} showed that the SDP, SOCP and linear programming (LP) relaxations 
provide the same objective value.
% The optimal value of the SDP and SOCP relaxations are, however, an approximate value of the original QCQPs. 
For these classes of QCQPs, % the use of
the SOCP relaxations are far more efficient than the SDP relaxation. % Furthermore, sparsity exploitation techniques developed for the SDP relaxation can be also adapted to improve the efficacy of the SOCP relaxation.

The main purpose of this paper is % to discuss an exploitation
to exploit the aggregate  sparsity in the SOCP relaxation. 
Our approach in this paper has two major differences from 
the method utilizing the chordal sparsity in SDP~\cite{fukuda2001exploiting}. %The first difference is that
First, it does not require any additional equality constraints to exploit the sparsity in the SOCP. 
Thus, the proposed  method for the SOCP can improve the computational efficiency for
 solving the SOCP relaxation. 
 %This is more beneficial for the problems that have the same optimal values in the SDP and SOCP relaxations. 
The problems, for which  the SDP and SOCP relaxations provide the same optimal value,
 especially can benefit  from  the increased efficiency of the proposed method.
Second, our approach can generate a completed matrix that attains the maximum determinant without relying on the completion procedure as in the SDP relaxation. As a result,  %our approach does not need to execute 
the completion procedure is not necessary in our approach. %Thus,  
Thus, we can expect that
the SOCP relaxation can be solved much faster.

We report numerical results to demonstrate the efficiency improved by exploiting the sparsity in the SOCP relaxation. 
We generated test instances to satisfy the condition of \cite{kim2003exact} using lattice graphs, and we also tested instances of the pooling problems from \cite{kimizuka2018solving}. Through the numerical experiments, we observe that the SOCP relaxation with the proposed sparsity exploitation spends much less computational time to obtain the same objective value as the SDP relaxations. 
	
The rest of the paper is organized as follows. In Section~\ref{sec:pre}, we briefly review  relaxation methods for QCQPs and
describe some background
on exploiting the chordal sparsity in SDPs.  In Section~\ref{sec:sparsity}, we discuss the sparsity exploitation in the SOCP relaxation. We prove %a mathematical property
 that the completed matrix in the proposed approach attains the maximum determinant among possible completed matrices.  In Section~\ref{sec:experiments}, we report the numerical results on the QCQPs from the lattice  graph and  pooling problem.
Finally, in Section~\ref{sec:conclusion}, we give our conclusion remarks.
%=============Section2 Preliminaries ===================
\section{Preliminaries}\label{sec:pre}
	% Before reviewing some mathematical backgrounds,
	 We start this section by introducing some notation that will be  used in this paper. We use the superscript $\top$ to denote the transpose of a matrix or a vector.
	Let $\mathbb{R}^n$ be a $n$-dimensional Euclidean space. We use $\mathbb{R}_{+}^n \subset \mathbb{R}^n$ for a nonnegative orthant in $n$-dimensional Euclidean space. % For the inner product of two vectors $\x, \y \in \mathbb{R}^n$, we use the notation $\x \cdot y$.  
	% For a vector  $\x \in \Real^n$, we use the Euclidean norm ($\ell_{2}-$norm) $\Vert \x \Vert = \sqrt{\x^\top \x}$. 
Let $\mathbb{R}^{n \times n}$ be the set of $n \times n$ real matrices, and let $\mathbb{S}^n \subset \Real^{n \times n}$ be the set of $n \times n$ real symmetric matrices. We use $\mathbb{S}_{+}^n (\mathbb{S}_{++}^n) \subset \mathbb{S}^n$ for the set of positive semidefinite matrices (positive definite matrices, respectively) of dimension $n$. 
We also use $\X \succeq \O (\X \succ \O)$ to denote 
$\X$ is a positive semidefinite matrix (a positive definite matrix, respectively).
The inner product $\X \bullet \Y$ between $\X$ and $\Y$ in $\mathbb{S}^n$ is defined by $\X \bullet \Y = trace(\X \Y)$. 
	
%========QCQP relaxation=========
\subsection{SDP and SOCP relaxations of QCQPs}

A general form of QCQPs can be given as follows:	
\begin{align}
\begin{array}{rcl}
\mbox{minimize} &: &\x^\top \P_{0} \x +2\q_{0}^\top \x+r_{0} \\
\mbox{subject to} &: &\x^\top \P_{k} \x +2\q_{k}^\top \x+r_{k} \leq 0  \ \text{for} \  1 \le k \le m,
\end{array} \label{QCQP1}
\end{align}
where the decision variable is $\x \in \Real^n$, and the input data are $\P_{k} \in \SymMat^n$, $\q_{k} \in \mathbb{R}^n$ and $r_{k} \in \mathbb{R}$ for $ k = 0, \ldots, m$.
	
If $\P_{k} \succeq \O$ for all $k$, the problem~\eqref{QCQP1} is  a convex problem and can be solved efficiently, for example, by interior-point methods~\cite{nesterov1994interior}. %In general, however, 
In contrast, if $\P_{k}$  is not positive semidefinite matrix for some $k$, the problem is  nonconvex and can be NP~\cite{d2003relaxations}.
		
We define $\Q_{k} := \left[ \begin{matrix} r_{k} & \q_{k}^\top \\ \q_{k} & \P_{k} \end{matrix} \right] \in \SymMat^{n+1}$ for each $k = 0, \ldots, m$, and introduce a variable matrix $\X \in \SymMat^{n+1}$.  The
standard SDP relaxation based on lift-and-project convex relaxation \cite{balas2005projection, grotschel2012geometric, lovasz1991cones} for QCQP~\eqref{QCQP1}
can be given as follows:
\begin{align} 
\mbox{minimize:} \ \ &\Q_{0} \bullet \X \nonumber \\
\mbox{subject to:} \ \ &\Q_{k} \bullet \X \leq 0 \ \text{for} \ 1 \le k \le m  \nonumber \\
& \H_0 \bullet \X  = 1 \nonumber \\
&\X  \in \SymMat_{+}^{n+1},  \label{SDPrelax}
\end{align}
where $\H_{0} := \left[ \begin{array}{cc} 1 & \0^\top \\ \0 & \O \end{array} \right] \in \SymMat^{n+1}$.

%	the 
%	can be equivalently represented as follows:	
%	\begin{align}
%	\mbox{minimize:} \ \ &Q_{0} \bullet X \nonumber \\
%	\mbox{subject to:} \ \ &Q_{k} \bullet X \leq 0  \ (k=1,... ,m)  \nonumber \\
%	&X =  \left[ \begin{matrix} 1 & \phantom{-} x^\top \\ x & \tilde{X}  \end{matrix} \right] \succeq 0 , \tilde{X} =xx^\top \label{QCQP2}
%	\end{align}
%	
%	The last constraints of the QCQP\eqref{QCQP2} refers that the variable $X$ is a symmetric rank one positive semidefinite matrix. 
%	Particularly, the problem\eqref{QCQP2} contains a nonconvex constraint (i.e., rank one constraint), $\tilde{X} =xx^\top$. By dropping this			nonconvex constraint, we  relax the QCQP\eqref{QCQP2} to a convex SDP. The SDP relaxation is defined as follow:
%	

By further relaxing the positive semidefinite condition $\X \in \SymMat_{+}^{n+1}$ with 
the positive semidefinite conditions
of all $2 \times 2$ principal sub-matrices, Kim and Kojima \cite{kim2001second} proposed an SOCP relaxation \eqref{SOCPrelax} below. This corresponds to the dual of
the first level of the scaled diagonally dominant sum-of-squares (SDSOS) relaxation hierarchy discussed in \cite{ahmadi2014dsos}.
	\begin{align}
\mbox{minimize:} \ \ &\Q_{0} \bullet \X \nonumber \\
\mbox{subject to:} \ \ &\Q_{k} \bullet \X \leq 0 \ \text{for} \ 1 \le k \le m  \nonumber \\
& \H_{0} \bullet \X = 1 \nonumber \\
&\X  \in \SymT_{+}^{n+1}.	\label{SOCPrelax}
\end{align}
%We define
Here, $\SymT_+^{n+1}$ is defined to denote the set of symmetric matrices that satisfy
the positive semidefinite conditions of all $2 \times 2$  principal sub-matrices as 
\begin{align}
\SymT_{+}^{n+1} := \{\X \in \SymMat^{n+1} | \X^{ij} \succeq \O
\ \text{for} \ (i,j) \in J\}.
\label{eq:TC}
\end{align}
where $\X^{ij}$ is defined by
\begin{align}
\X^{ij} := \left(\begin{array}{cc}
X_{ii} & X_{ij} \\ X_{ij} & X_{jj} \end{array} \right)
\in \SymMat^2
\end{align}
and $J$ is an index set defined by $J := \{(i,j) | 1 \le i < j \le n+1\}$.
The equivalence 
between 
$\X^{ij}  \succeq  \O$ and
$\frac{X_{ii} +X_{jj}}{2} \geq \left\Vert \left( \begin{array}{c} \frac{X_{ii} -X_{jj}}{2} \\ X_{ij} \end{array} \right)  \right\Vert$
enables us to formulate \eqref{SOCPrelax} problem as an SOCP~\cite{kim2001second},
therefore, \eqref{SOCPrelax} is an SOCP relaxation of \eqref{QCQP1}.

Due to the relation $\SymMat_+^{n+1} \subset \SymT_+^{n+1}$,
the SOCP relaxation \eqref{SOCPrelax} is weaker than the SDP relaxation \eqref{SDPrelax}.
However, there exist certain classes of QCQPs whose SDP and SOCP relaxations are  exact, 
for example, \cite{kim2003exact}. 
% In some classes of the QCQPs, furthermore,  SDP, SOCP and LP relaxations of the problems have the same  optimal value \cite{kimizuka2018solving}. 
	
 %====== Chordal sparsity ==============

\subsection{The matrix completion with chordal graph}
%In practical applications, 
It is frequently observed  in many applications that
the matrices $\Q_0, \Q_1, \ldots, \Q_m$  have some structural sparsity.
Fukuda et al.~\cite{fukuda2001exploiting} and Nakata et al.~\cite{nakata2003exploiting} exploited the sparsity in the data matrices
using the chordal graph. We call such  sparsity related to the chordal graph the chordal sparsity.
%Here, 
We briefly describe their matrix completion technique.

For the SDP relaxation problem \eqref{SDPrelax}, we first define 
an aggregate sparsity graph $G(V,\overline{E})$ as an undirected graph
with the set of vertices $V = \{1, 2, \ldots, n+1\}$ 
and  the set of edges 
\begin{equation}
\overline{E} = \left\{(i,j) \in J \ \middle| \   \text{and} \ 
\ [\Q_k]_{ij} \neq 0 \ \text{for some} \ k \in \{0, 1, \ldots, m\}\right\}.
\end{equation}
We sometimes call $\overline{E}$ the aggregate sparsity pattern,
and the sparsity related to $\overline{E}$ the aggregate sparsity.
An undirected graph is called \textit{chordal} if every cycle of four or more vertices has a chord.
When the aggregate sparsity graph $G(V,\overline{E})$ is not chordal,   % graph,
we can find a chordal graph $G(V, \widehat{E})$ such that $\overline{E} \subset \widehat{E} \subset J$
by adding appropriate edges to $\overline{E}$.
The graph $G(V, \widehat{E})$ is called a chordal extension, and
it is known that the chordal extension is related with the sparse Cholesky factorization~\cite{vandenberghe2015chordal}.

A vertex set $C \subset V$ is called a clique if the induced subgraph 
$G(C, (C \times C) \cap \overline{E})$ is a complete graph,
and a clique $C$ is called a maximal clique if it is not a subset of any other clique.
When $G(V, \widehat{E})$ is a chordal graph, we can enumerate the set of maximal cliques $\Lambda = \{C_1, \ldots, C_p\}$, where $p$ is the number of maximal cliques. Without loss of generality, we can assume that the order of 
$\{C_1, \ldots, C_p\}$ is a perfect elimination ordering~\cite{fukuda2001exploiting, nakata2003exploiting},
and that each vertex in $V$ is covered by at least one maximal clique, that is, $V =  \cup_{l=1}^p C_l$.

We use the following notation for an edge set $E \subset J$ and a matrix $\overline{\X} \in \SymMat^{n+1}$: % and a vertex set $C \subset V$:
\begin{align}
\begin{array}{lcl}
\SymMat^{n+1}(E, \overline{\X}, ?) &=& \{ \X \in \SymMat^{n+1} | X_{ij} = \overline{X}_{ij} \ \text{for} \ (i,j) \in E \cup D\} \\
 \SymMat_{+}^{n+1} (E, ?) &=& \{\X \in \SymMat^{n+1} | 
  \exists \widehat{\X} \in \SymMat_{+}^{n+1} \  \text{such that} \ \widehat{\X} \in \SymMat^{n+1}(E, \X, ?) \}  \\
% \SymMat_{+}^{n+1} (E, \overline{\X}, ?) &=& \SymMat^{n+1} (E,\overline{\X},?) \cap \SymMat_{+}^{n+1} \\
% \SymMat_{++}^{n+1} (E,?) &=& \SymMat^{n+1} (E,?) \cap \SymMat_{++}^{n+1} \\
\SymMat^{n+1}(E,0) &=& \{\X \in \SymMat^{n+1} | X_{ij} = X_{ji} = 0 \  \text{for} \  (i,j) \notin E \cup D \} \\
\SymMat_{+}^{n+1}(E,0) &=& \SymMat^{n+1}(E,0) \cap \SymMat_{+}^{n+1}. 
% \SymMat_{++}^n(E,0) &=& \{ X \in \SymMat_{++}^n | X_{ij} = 0 \   \forall (i,j) \notin E \} \\
% \SymMat^{C} &=& \{ \X \in \SymMat^{n+1} | X_{ij} =0 \ \text{for} \  (i,j) \notin C \times C   \}  \\
% \SymMat_{+}^{C} &=&\{ \X \in \SymMat_{+}^{n+1} | X_{ij} =0 \ \text{for} \  (i,j) \notin C \times C \} \\
\end{array}
\end{align}
Here, $D$ is the index set that corresponds to the diagonal elements;
$D = \{(i,i) : 1 \le i \le n+1 \}$.
We also define $\overline{\X}(E, C)$  as a matrix in  $\SymMat^{n+1}(E, \overline{\X}, ?) \cap \SymMat^{n+1}((C \times C) \cap J, 0)$.
Note that if $E$ is the edge set of the chordal extension (that is, $E=\widehat{E}$) and $C_l$ is a clique of $G(V, \widehat{E})$, then $\overline{\X} (\widehat{E}, C_l)$ is uniquely determined for a given $\overline{\X} \in \SymMat^{n+1}$.

The fundamental theorem on the matrix completion by Grone et al.~\cite{grone1984positive}
can be described as follows.
\begin{theorem} {\upshape \cite{grone1984positive}}
    Fix $\overline{\X} \in \SymMat^{n+1}$.
 For $\X \in \SymMat^{n+1}(\widehat{E},\overline{\X},?)$, it holds that $\X \in \SymMat_+^{n+1}(\widehat{E},?)$ if and only if $\overline{\X}(\widehat{E}, C_{l}) \succeq \O$ for each $C_{l}  \in \Lambda$.
\end{theorem}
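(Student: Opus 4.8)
\quad The plan is to prove the two implications separately; the forward (``only if'') implication is routine, while the reverse (``if'') implication carries the real content and needs an inductive completion along the clique tree. For necessity, assume $\X \in \SymMat_+^{n+1}(\widehat{E},?)$, so there exists $\widehat{\X} \in \SymMat_+^{n+1}$ agreeing with $\X$, hence with $\overline{\X}$, on every index pair in $\widehat{E} \cup D$. Fixing a maximal clique $C_l \in \Lambda$, every pair $(i,j)$ with $i,j \in C_l$ lies in $\widehat{E} \cup D$ because $C_l$ induces a complete subgraph, so the principal submatrix of $\widehat{\X}$ indexed by $C_l$ equals the one determined by $\overline{\X}$. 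A principal submatrix of a positive semidefinite matrix is positive semidefinite and zero-padding preserves this, so $\overline{\X}(\widehat{E}, C_l) \succeq \O$.

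For sufficiency I would induct on the number $p$ of maximal cliques. When $p=1$ the graph is complete, $\X = \overline{\X}$ is fully specified, and the hypothesis $\overline{\X}(\widehat{E},C_1) \succeq \O$ is exactly $\X \succeq \O$, so $\X$ is its own completion. For the inductive step I would use the perfect elimination ordering of $\{C_1,\dots,C_p\}$, which supplies the running intersection property: the separator $S := C_p \cap (C_1 \cup \dots \cup C_{p-1})$ is contained in a single earlier clique, and the vertices $\alpha := C_p \setminus S$ are adjacent in $G(V,\widehat{E})$ only to vertices of $C_p$. Deleting $\alpha$ leaves a chordal graph on $V \setminus \alpha$ with maximal cliques $C_1,\dots,C_{p-1}$ whose clique submatrices stay positive semidefinite, so the induction hypothesis yields a positive semidefinite completion $\Y$ of the restricted partial matrix.

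The crux is reinserting the rows and columns indexed by $\alpha$. Partitioning the indices as $\alpha$, $S$, $\gamma := V \setminus C_p$, the only unspecified entries are those of the $\alpha$-$\gamma$ block, since $\alpha$ has no edges to $\gamma$. I would invoke the single-block completion lemma: with $W$ the $S$-$S$ block, $U$ the $\alpha$-$S$ block and $V$ the $S$-$\gamma$ block, setting the unknown $\alpha$-$\gamma$ block to $U W^{+} V$ (with $W^{+}$ the Moore--Penrose pseudoinverse) produces a positive semidefinite matrix. This is verified by a Schur-complement computation: the completed matrix is positive semidefinite iff the two already-specified principal blocks --- the clique block on $\alpha \cup S$ (positive semidefinite by hypothesis) and the block on $S \cup \gamma$ (positive semidefinite via $\Y$) --- are positive semidefinite together with a range condition, and $U W^{+} V$ is precisely the choice that annihilates the coupling term of the Schur complement.

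The main obstacle is the case of a singular separator block $W$. When $W \succ \O$ the completion $U W^{-1} V$ is transparent. In general one must verify the range inclusions $\operatorname{range}(U^\top) \subseteq \operatorname{range}(W)$ and $\operatorname{range}(V) \subseteq \operatorname{range}(W)$, which follow from positive semidefiniteness of the two specified blocks, so that $U W^{+} V$ behaves correctly and the pseudoinverse Schur complements stay positive semidefinite. Alternatively I would dispatch this case by continuity: perturb $\overline{\X}$ to $\overline{\X} + \epsilon I$ so every separator block becomes positive definite, complete exactly with genuine inverses at each stage, and let $\epsilon \to 0^+$, using that $\SymMat_+^{n+1}$ is closed to carry positive semidefiniteness to the limit.
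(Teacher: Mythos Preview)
Your argument is correct and is essentially the classical proof due to Grone, Johnson, S\'a and Wolkowicz. Note, however, that the paper does not supply its own proof of this theorem: it is stated with a citation to \cite{grone1984positive} and used as a black box, so there is no in-paper proof to compare against. Your induction on the number of maximal cliques along a perfect elimination ordering, with the one-block completion via $U W^{+} V$ and a Schur-complement verification, is exactly the standard route in the original reference; the pseudoinverse/range-inclusion treatment (or the $\epsilon$-perturbation alternative) for singular separator blocks is also the standard way to handle degeneracy. One small point worth tightening: when you assert that deleting $\alpha = C_p \setminus S$ leaves a chordal graph whose maximal cliques are $C_1,\dots,C_{p-1}$, this is where you are implicitly using that $C_p$ is a leaf of the clique tree (equivalently, that the ordering satisfies the running intersection property), so that $\alpha$ is disjoint from $C_1 \cup \dots \cup C_{p-1}$; it would be good to say this explicitly rather than leave it to the phrase ``perfect elimination ordering.''
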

The main idea in \cite{fukuda2001exploiting} and \cite{nakata2003exploiting}
is to replace the positive semidefinite condition $\X \succeq \O$ with
the positive semidefinite conditions for the sub-matrices $\X(\widehat{E}, {C_1}), \ldots, \X(\widehat{E}, C_p) \succeq  \O$. 
More precisely,
the following SDP \eqref{SDPrelaxDC} is solved instead of \eqref{SDPrelax}:
	\begin{align} 
\mbox{minimize:} \ \ &\sum_{l=1}^p \Q_{0}(\widehat{E}, C_{l}) \bullet \X(\widehat{E}, C_{l}) \nonumber \\
\mbox{subject to:} \ \ &\sum_{l=1}^p \Q_{k}(\widehat{E}, C_{l}) \bullet \X(\widehat{E}, C_{l}) \leq 0  \ (k=1,..., m)  \nonumber \\
& [\X(\widehat{E}, C_{u})]_{11} = 1 \ \text{for} \ u \in \left\{u \in \{1, 2, \ldots, p\} | 1 \in C_u\right\}\nonumber \\
&[\X(\widehat{E}, C_{u})]_{ij} = [\X(\widehat{E}, C_{v})]_{ij}  \ \text{for} \ (i,j,u,v) \in U \nonumber \\
&\X(\widehat{E}, C_{l})  \succeq \O \ (l=1,..., p).  \label{SDPrelaxDC}
\end{align}
For each $k$, $\Q_k$ is decomposed into 
$\Q_{k}(\widehat{E}, C_1), \ldots, \Q_{k}(\widehat{E}, C_p) \in \SymMat^{n+1}$ such that 
$\Q_{k} = \sum_{l=1}^p \Q_{k}(\widehat{E}, C_{l})$.
%Such a decomposition always exists, 
It is always possible to decompose as above
since $\widehat{E}$ is a chordal extension of the aggregate sparsity pattern graph.
The set $U$ defined by
\begin{align}
\begin{array}{rcl}
	U = \{(i,j,u,v) &|& (i,j) \in (C_{u} \times C_{u}) \cap  (C_{v} \times C_{v}) \}
    \backslash \{(1,1)\}, \ i<j, \\ 
	& & (C_{u} \times C_{u}) \cap  (C_{v} \times C_{v}) \neq \emptyset \  \ \mbox{for} \ \ 1 \leq u < v \leq p  \}.
    \end{array}
	%\end{equation}
    \end{align}
 represents the overlaps among the cliques, thus
the additional equality constraints 
$[\X(\widehat{E}, C_{u})]_{11} = 1$ and 
$[\X(\widehat{E}, C_{u})]_{ij} = [\X(\widehat{E}, C_{v})]_{ij}$ should be
 introduced in \eqref{SDPrelaxDC} for the overlapped elements.

We use $\overline{\X}(\widehat{E}, C_1), \ldots, \overline{\X}(\widehat{E}, C_l)$ to denote a solution of \eqref{SDPrelaxDC}. To simplify the discussion here, we assume
$\overline{\X}(\widehat{E}, C_1), \ldots, \overline{\X}(\widehat{E}, C_l)$ are positive definite matrices.
With the additional equality constraints above, we can uniquely determine the entire matrix $\overline{\X} \in \SymMat^{n+1}$ such that 
\begin{align}
\overline{X}_{ij} = 
\left\{\begin{array}{lcl}
% 1 & \text{if} & i=j=1 \\
 {[\overline{\X}(\widehat{E}, C_l)]}_{ij} & \text{if} & (i,j) \in C_l \times C_l \ \text{for some } l \in \{1, \ldots, p\} \\
0 & \text{if} & (i,j) \notin 
% \{(1,1)\} \cup 
(C_1 \times C_1) \cup \ldots \cup (C_p \times C_p).
\end{array}\right.
\end{align}

Since the entire $\overline{\X}$ is not necessarily positive semidefinite, 
we complete $\overline{\X}$ to $\widehat{\X} \in \SymMat_{+}(\widehat{E}, ?)$
by the completion procedure of \cite{fukuda2001exploiting}. 
%Detailed steps of 
The completion procedure is  described in detail in 
\cite{nakata2003exploiting} %and the steps are executed 
where the matrix is  completed in the order of the maximal cliques 
%along with the order of the maximal cliques 
$\{C_1, \ldots, C_p\}$.
%An important mathematical property of
We include the following lemma on $\widehat{\X}$ for the subsequent discussion.
%can be found in 
%is described in the following lemma. 

\begin{lemma}\label{lem:completion}
    {\upshape \cite{fukuda2001exploiting}}.
    The completed matrix $\widehat{\X}$ computed by the completion procedure
    is the unique positive definite matrix that maximizes the determinant 
    among all possible matrix completions of $\overline{\X}$,
    that is,
    \begin{align}
    \det \widehat{\X} = \max \left\{\det \X \ \middle| \ \X \in \SymMat_{++}^{n+1} \cap \SymMat^{n+1}(\widehat{E}, \overline{\X}, ?) \right\}.
    \end{align}    
%    In addition,  the inverse matrix of  $\widehat{\X}$ satisfies
%    \begin{align}
%    \widehat{\X}^{-1} \in \SymMat_{++}^{n+1}(\widehat{E}, 0).
%    \end{align}
\end{lemma}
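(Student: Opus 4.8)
The plan is to recast the claim as a strictly concave maximization and pin down the maximizer through its first-order optimality condition. Since $\log$ is increasing, maximizing $\det \X$ over the feasible set is equivalent to maximizing $f(\X) := \log \det \X$, and $f$ is strictly concave on $\SymMat_{++}^{n+1}$. The feasible set $\FC := \SymMat_{++}^{n+1} \cap \SymMat^{n+1}(\widehat{E}, \overline{\X}, ?)$ is the intersection of the positive definite cone with an affine subspace, hence convex, and it is nonempty: because each $\overline{\X}(\widehat{E}, C_l) \succ \O$, the theorem of Grone et al.\ guarantees a positive definite completion. A maximizer exists on $\FC$ as well. Fixing the diagonal entries to $\overline{X}_{ii}$ forces $|X_{ij}| \le \sqrt{\overline{X}_{ii}\,\overline{X}_{jj}}$ on the closure of $\FC$, so that closure is compact; since $\det$ is continuous and vanishes on the singular boundary while being positive at the completion provided by Grone et al., its maximum over the closure is positive and therefore attained at an interior point $\X^\star \in \FC$. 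Strict concavity of $f$ then makes this maximizer unique.

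Next I would extract the optimality condition. The free variables are precisely the entries $X_{ij}$ with $(i,j) \notin \widehat{E} \cup D$, all remaining entries being fixed to match $\overline{\X}$. Since $\nabla f(\X) = \X^{-1}$ and the directional derivative of $f$ along the symmetric perturbation supported on the $(i,j)$ and $(j,i)$ positions equals $2(\X^{-1})_{ij}$, stationarity at the interior maximizer forces $(\X^{\star\,-1})_{ij} = 0$ for every free pair $(i,j)$. Equivalently $(\X^\star)^{-1} \in \SymMat^{n+1}(\widehat{E}, 0)$: the inverse of the max-determinant completion inherits the sparsity pattern of the chordal extension. This yields a clean characterization, namely that $\X^\star$ is the unique matrix in $\FC$ whose inverse is supported on $\widehat{E} \cup D$.

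It then remains to show that the matrix $\widehat{\X}$ output by the completion procedure coincides with $\X^\star$, for which it suffices to verify that $\widehat{\X}$ satisfies this same characterizing property: $\widehat{\X} \in \FC$ and $\widehat{\X}^{-1} \in \SymMat^{n+1}(\widehat{E},0)$. By construction the procedure preserves the prescribed entries on $\widehat{E} \cup D$ and keeps the matrix positive definite, so $\widehat{\X} \in \FC$ is immediate. The substantive point, and what I expect to be the main obstacle, is to prove that the clique-by-clique completion along the perfect elimination ordering of $\{C_1, \ldots, C_p\}$ produces a matrix whose inverse has zeros in exactly the off-chordal positions. I would argue this by induction on the maximal cliques, using the running-intersection property of the clique tree: at the step that introduces clique $C_l$, the newly exposed off-diagonal block is set to the unique value that decouples $C_l$ from the previously placed vertices given their separator (the clique overlap), which is precisely the choice that creates no new nonzero in the inverse. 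Combining this with the uniqueness afforded by strict concavity gives $\widehat{\X} = \X^\star$, and hence the displayed max-determinant identity.
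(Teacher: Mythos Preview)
The paper does not supply its own proof of this lemma; it is stated as a quotation from Fukuda et al.~\cite{fukuda2001exploiting} and used only as background. So there is nothing in the present paper to compare your argument against.

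That said, your outline is the standard route taken in the cited source: strict concavity of $\log\det$ on $\SymMat_{++}^{n+1}$ yields existence and uniqueness of the maximizer over the affine section $\FC$, the first-order condition pins it down as the unique completion whose inverse lies in $\SymMat^{n+1}(\widehat{E},0)$, and one then checks that the recursive clique-by-clique procedure outputs exactly this matrix. The part you flag as the main obstacle is indeed where the work lies; in \cite{fukuda2001exploiting} it is handled by writing each completion step as an explicit Schur-complement formula across the separator $C_l \cap (C_1 \cup \cdots \cup C_{l-1})$ and invoking the running-intersection property of the clique tree to propagate the zero pattern of the inverse, which is precisely the inductive mechanism you sketch but do not carry out in detail.
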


It follows from \cite{fukuda2001exploiting} that the completed matrix $\widehat{\X}$ is an optimal solution of the SDP relaxation problem~\eqref{SDPrelax}.

%At the end of this subsection, 
We should mention that %for the computation efficiency. 
there exists trade-off in terms of the computational efficiency for solving \eqref{SDPrelaxDC}. %The decomposition of
Replacing $\X \succeq \O$ 
with $\X(\widehat{E}, C_1), \ldots, $ $ \X(\widehat{E}, C_p) \succeq  \O$ reduces the computational cost required % necessary 
for %to satisfy
 $\X \succeq \O$, especially when  the size of $\X$ is large.  %is beneficial in solving the SDP, 
 However, the additional equality constraints such as $[\X(\widehat{E}, C_{u})]_{11} = 1$ and 
$[\X(\widehat{E}, C_{u})]_{ij} = [\X(\widehat{E}, C_{v})]_{ij}$ can be new computational burden. Moreover,
the completion procedure needs to be performed. % required. 
%In other words, 
Thus,  the conversion to \eqref{SDPrelaxDC} works well
when the cliques $C_1, \ldots,  C_p$ are small and 
the size of the set  $U$ for the overlapping elements is small.

%===== Sparsity of the SOCP=========================================================
\section{The sparsity of the SOCP relaxation}
\label{sec:sparsity}

In this section, we discuss how  the aggregate sparsity 
in the SOCP relaxation \eqref{SOCPrelax} can be exploited and how it is different %compare it 
from the case in the SDP relaxation.
We will also investigate the sparsity exploitation using dual problems.

\subsection{A matrix completion in the SOCP relaxation}\label{sec:mc-socp}
Similarly to the chordal sparsity in the SDP relaxation, 
we define the following notation for an edge set $E \subset J$: % and a vertex set $C \subset V$:
\begin{equation}
\begin{array}{lcl}
% \SymMat^{n+1}(E, \overline{\X}, ?) &:=& \{ \X \in \SymMat^{n+1} | X_{ij} = \overline{X}_{ij} \ \text{for} \ (i,j) \in E \cup D\} \\
\SymT_{+}^{n+1} (E, ?) &:=& \{\X \in \SymMat^{n+1} | 
\exists \widehat{\X} \in \SymT_{+}^{n+1} \  \text{such that} \ \widehat{\X} \in \SymMat^{n+1}(E, \X, ?) \}  \\
% \SymMat_{+}^{n+1} (E, \overline{\X}, ?) &=& \SymMat^{n+1} (E,\overline{\X},?) \cap \SymMat_{+}^{n+1} \\
% \SymMat_{++}^{n+1} (E,?) &=& \SymMat^{n+1} (E,?) \cap \SymMat_{++}^{n+1} \\
% \SymMat^{n+1}(E,0) &:=& \{\X \in \SymMat^{n+1} | X_{ij} = 0 \  \text{for} \  (i,j) \notin E \} \\
\SymT_{+}^{n+1}(E,0) &:=& \SymMat^{n+1}(E,0) \cap \SymT_{+}^{n+1}. 
% \SymMat_{++}^n(E,0) &=& \{ X \in \SymMat_{++}^n | X_{ij} = 0 \   \forall (i,j) \notin E \} \\
% \SymMat^{C} &=& \{ \X \in \SymMat^{n+1} | X_{ij} =0 \ \text{for} \  (i,j) \notin C \times C   \}  \\
% \SymMat_{+}^{C} &=&\{ \X \in \SymMat_{+}^{n+1} | X_{ij} =0 \ \text{for} \  (i,j) \notin C \times C \} \\
\end{array}
\end{equation}
The set $\overline{\SymT}_+ (E)$ for an edge set $E$ is defined by
\begin{equation}
\overline{\SymT}_+^{n+1} (E) := \{\X \in \SymMat^{n+1} |
\X^{ij} \succeq \O \ \text{for} \ (i,j) \in E,  X_{ii} \ge 0 \ \text{for} \ i \in V_e \}, 
\end{equation}
where 
\begin{equation}
\begin{array}{rcl}
V_e &:=& \{i \in V | 
(i,j) \notin E \ \text{for} \ \forall j \in \{i+1, \ldots, n+1\} \\

& & \text{and} \  (j,i) \notin E \ \text{for} \ \forall j \in \{1, \ldots, i-1\}
\} 
\end{array}
\end{equation}
is the set of vertices that are not involved in $E$.
Since $\X^{ij} \succeq \O$ guarantees the non-negativeness of  $X_{ii}$
and $X_{jj}$ for $(i,j) \in E$, we know that all the diagonal elements of any matrix in 
$\overline{\SymT}_+^{n+1} (E)$ are nonnegative, that is, 
$X_{ii} \ge 0$ holds for each $i \in V$ 
if $\X \in \overline{\SymT}_+^{n+1} (E)$.

%In the following lemma, we discuss 
We now examine the equivalence between $\SymT_{+}^{n+1}(E, ?)$ and $\overline{\SymT}_+^{n+1} (E)$.
For a matrix $\X \in \overline{\SymT}_+^{n+1}(E)$, let us consider a range $R_{ij}(\X) := [-\sqrt{X_{ii} X_{jj}}, \sqrt{X_{ii} X_{jj}}]$ for each $(i,j) \in J$ and
 introduce a set 
\begin{equation}
\widehat{\SymT}_+^{n+1} (E, \X) := \{\widehat{\X} \in \SymMat^{n+1}(E, \X, ?)  | 
\widehat{X}_{ij} \in R_{ij}(\X) \ \text{for} \ (i,j) \notin E \cup D\}.
\end{equation}

\begin{lemma}\label{lem:SymT}
    It holds that 
    $\SymT_{+}^{n+1}(E, ?) = \overline{\SymT}_+^{n+1} (E)$.
\end{lemma}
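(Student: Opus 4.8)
The plan is to establish the two set inclusions separately, exploiting the fact that membership in $\SymT_+^{n+1}$ is purely a collection of independent conditions on the $2\times 2$ principal submatrices $\X^{ij}$, one for each $(i,j) \in J$. This decoupling across index pairs is what makes both directions elementary.

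First I would show $\SymT_{+}^{n+1}(E, ?) \subseteq \overline{\SymT}_+^{n+1}(E)$. Given $\X$ in the left-hand side, by definition there exists $\widehat{\X} \in \SymT_+^{n+1}$ with $\widehat{X}_{ij} = X_{ij}$ for all $(i,j) \in E \cup D$. Since every $2\times 2$ principal submatrix of $\widehat{\X}$ is positive semidefinite, for $(i,j) \in E$ the three index pairs determining $\widehat{\X}^{ij}$, namely $(i,i)$, $(j,j) \in D$ and $(i,j) \in E$, all lie in $E \cup D$, so $\widehat{\X}^{ij}$ coincides with $\X^{ij}$, giving $\X^{ij} = \widehat{\X}^{ij} \succeq \O$. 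For $i \in V_e$, choosing any $j$ with $(i,j) \in J$ and using $\widehat{\X}^{ij} \succeq \O$ forces $X_{ii} = \widehat{X}_{ii} \ge 0$. Hence $\X \in \overline{\SymT}_+^{n+1}(E)$.

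For the reverse inclusion $\overline{\SymT}_+^{n+1}(E) \subseteq \SymT_{+}^{n+1}(E, ?)$, I would use the auxiliary set $\widehat{\SymT}_+^{n+1}(E, \X)$ to exhibit an explicit completion. Fix $\X \in \overline{\SymT}_+^{n+1}(E)$. The first step is to check that every diagonal entry is nonnegative: a vertex $i$ either belongs to $V_e$, in which case $X_{ii} \ge 0$ by hypothesis, or it is incident to some edge $(i,j) \in E$, in which case $\X^{ij} \succeq \O$ yields $X_{ii} \ge 0$. Consequently $X_{ii} X_{jj} \ge 0$ for every $(i,j) \in J$, so each interval $R_{ij}(\X)$ is nonempty (it contains $0$) and $\widehat{\SymT}_+^{n+1}(E, \X)$ is nonempty. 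Pick any $\widehat{\X}$ in it. For $(i,j) \in E$ we have $\widehat{\X}^{ij} = \X^{ij} \succeq \O$; for $(i,j) \in J \setminus E$ the defining condition $\widehat{X}_{ij} \in R_{ij}(\X)$ means $\widehat{X}_{ij}^2 \le X_{ii} X_{jj}$, which together with $X_{ii}, X_{jj} \ge 0$ gives $\widehat{\X}^{ij} \succeq \O$. Thus $\widehat{\X} \in \SymT_+^{n+1}$ and agrees with $\X$ on $E \cup D$, so $\X \in \SymT_{+}^{n+1}(E, ?)$.

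I expect the only genuinely substantive point to be the nonnegativity of all diagonal entries in the reverse direction, since this is precisely what guarantees that the ranges $R_{ij}(\X)$ are nonempty and hence that a valid completion exists. This is also the role played by the extra requirement $X_{ii} \ge 0$ for $i \in V_e$ in the definition of $\overline{\SymT}_+^{n+1}(E)$: the diagonal entry of a vertex not incident to any edge of $E$ is otherwise unconstrained, so it must be imposed separately.
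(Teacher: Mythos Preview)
Your proof is correct and follows essentially the same route as the paper: both directions are handled by the same decoupling of the $2\times 2$ principal-submatrix conditions, and the reverse inclusion is established by exhibiting a completion from $\widehat{\SymT}_+^{n+1}(E,\X)$. The only cosmetic difference is that the paper verifies $X_{ii}\ge 0$ for all $i$ (your ``substantive point'') in the text preceding the lemma rather than inside the proof, and in the forward direction it picks the specific neighbor $j=i+1$ rather than an arbitrary $j$.
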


\begin{proof}
    First, we show $\SymT_{+}^{n+1}(E, ?) \subset \overline{\SymT}_+^{n+1} (E)$. 
   Fix $\X \in \SymT_{+}^{n+1}(E, ?)$. Then, there exists $\widehat{\X} \in \SymT_+^{n+1}$ such that $\widehat{X}_{ij} = X_{ij}$ for $(i,j) \in E \cup D$.
   Since  $\widehat{\X} \in \SymT_+^{n+1}$ and $E \subset J$, 
   we know $\left[\begin{array}{cc}\widehat{X}_{ii} & \widehat{X}_{ij} \\
   \widehat{X}_{ij} & \widehat{X}_{jj} \end{array} \right] \succeq \O$ for $(i,j) \in E$,
thus $\widehat{X}_{ij} = X_{ij}$ for $(i,j) \in E \cup D$ leads to $\X^{ij} \succeq \O$.
Furthermore, for each $i \in V \backslash \{n+1\}$, it holds that $(i, i+1) \in J$.
Then, $\left[\begin{array}{cc}\widehat{X}_{ii} & \widehat{X}_{i,i+1} \\
\widehat{X}_{i,i+1} & \widehat{X}_{i+1, i+1} \end{array} \right] \succeq \O$ guarantees 
$\widehat{X}_{ii} \ge 0$ and $\widehat{X}_{i+1, i+1} \ge 0$ for each $i \in V\backslash \{n+1\}$. This implies $X_{ii} \ge 0$ for each $i \in V_e$,
since  $\widehat{X}_{ii} = X_{ii}$ for $(i,i) \in D$ and $V_e \subset V$.
    
For %the opposite direction,
$\SymT_{+}^{n+1}(E, ?) \supset \overline{\SymT}_+^{n+1} (E)$, we
fix $\X \in \overline{\SymT}_+^{n+1} (E)$ and take any matrix 
$\widehat{\X}$ from $\widehat{\SymT}_+^{n+1} (E, \X)$.
Then, we can show $\widehat{\X} \in \SymT_{+}^{n+1}$.
In particular, if $(i,j) \notin E \cup D$, then we have
$\left[\begin{array}{cc}\widehat{X}_{ii} & \widehat{X}_{ij} \\
\widehat{X}_{ij} & \widehat{X}_{jj} \end{array} \right] 
= \left[\begin{array}{cc}X_{ii} & \widehat{X}_{ij} \\
\widehat{X}_{ij} & X_{jj} \end{array} \right]\succeq \O$,
since $-\sqrt{X_{ii} X_{jj}} \le \widehat{X}_{ij} \le \sqrt{X_{ii} X_{jj}}$.

\end{proof}

\begin{rema}\label{rema:SOCP-sparsity}
    \upshape
   From the proof of Lemma~\ref{lem:SymT}, we observe that % also indicates that 
   % if we have
    for  a given matrix $\overline{\X} \in \overline{\SymT}_+^{n+1}(\overline{E})$ 
    corresponding to the aggregate sparsity graph $G(V,\overline{E})$,
     %then %we can complete
      $\overline{\X}$  can be completed
      to  some matrix $\widehat{\X} \in \widehat{\SymT}_+^{n+1}(\overline{E}, \overline{\X})$ without changing 
    the elements specified in $\overline{E}$.    
    In addition,  $\widehat{\SymT}_+^{n+1}(\overline{E}, \overline{\X})$ covers
    all possible completion matrices of $\overline{\X}$ in $\SymT_+^{n+1}$.
\end{rema}
As a result of the observation in Remark \ref{rema:SOCP-sparsity},
%This observation leads to a modification of 
we can modify \eqref{SOCPrelax} as 
the following SOCP~\eqref{SOCPsparsity}. % In particular, % we replace 
Notice that $\SymT_+^{n+1}$ is replaced with
$\overline{\SymT}_{+}^{n+1}(\overline{E})$.
\begin{align}
\mbox{minimize:} \ \ &\Q_{0} \bullet \X \nonumber \\
\mbox{subject to:} \ \ &\Q_{k} \bullet \X \leq 0 \ \text{for} \ 1 \le k \le m  \nonumber \\
& \H_{0} \bullet \X = 1 \nonumber \\
&\X  \in \overline{\SymT}_{+}^{n+1}(\overline{E}).	\label{SOCPsparsity}
\end{align}

Let $\overline{\X}$ be an optimal solution of \eqref{SOCPsparsity}.
We also let $\zeta$ and $\overline{\zeta}$ % to denote
be the optimal values of 
\eqref{SOCPrelax} and \eqref{SOCPsparsity}, respectively.
Since $\SymT_+^{n+1} \subset 
\overline{\SymT}_+^{n+1}(\overline{E})$ %due to
from the relation $\overline{E} \subset J$,  we have
$\zeta \ge \overline{\zeta}$, that is, 
\eqref{SOCPsparsity} is a further relaxation of \eqref{SOCPrelax} in general.
However, we can show the equivalence between $\zeta$ and $ \overline{\zeta}$.  % nice relation between 
%\eqref{SOCPrelax} and \eqref{SOCPsparsity}.

\begin{theorem}\label{theo:SOCPsparsity}
	%It holds that
	 For  $\zeta$ and $ \overline{\zeta}$, we have
	  $\zeta =  \overline{\zeta}$. 
	In addition, suppose that $\X^*$ and $\overline{\X}$ are optimal solutions 
	of \eqref{SOCPrelax} and \eqref{SOCPsparsity}, respectively.
	Then, $\X^*$ is an optimal solution of \eqref{SOCPsparsity},
	and $\widehat{\SymT}_+^{n+1}(\overline{E}, \overline{\X})$ is included in the set of optimal solutions of \eqref{SOCPrelax}.
\end{theorem}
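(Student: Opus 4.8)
The plan is to prove the nontrivial inequality $\zeta \le \overline{\zeta}$, which, combined with the inequality $\zeta \ge \overline{\zeta}$ already established in the text, gives $\zeta = \overline{\zeta}$. The single fact that makes everything work is the support of the data: by the definition of the aggregate sparsity graph $G(V,\overline{E})$, one has $[\Q_k]_{ij}=0$ for every $k$ whenever $(i,j)\notin \overline{E}\cup D$, and $\H_0$ is supported on $\{(1,1)\}\subset D$. Hence if two symmetric matrices agree on all entries indexed by $\overline{E}\cup D$, then the objective value $\Q_0\bullet\X$, each constraint value $\Q_k\bullet\X$, and $\H_0\bullet\X$ are identical for the two matrices. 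I would state this support invariance explicitly at the outset, since it is invoked repeatedly.

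First I would fix an optimal solution $\overline{\X}$ of \eqref{SOCPsparsity}, so $\overline{\X}\in\overline{\SymT}_+^{n+1}(\overline{E})$. By Lemma~\ref{lem:SymT} we have $\overline{\SymT}_+^{n+1}(\overline{E})=\SymT_+^{n+1}(\overline{E},?)$, and by Remark~\ref{rema:SOCP-sparsity} I can choose a completion $\widehat{\X}\in\widehat{\SymT}_+^{n+1}(\overline{E},\overline{\X})\subset\SymT_+^{n+1}$ that leaves every entry indexed by $\overline{E}\cup D$ unchanged. Applying the support invariance, $\widehat{\X}$ satisfies $\Q_k\bullet\widehat{\X}=\Q_k\bullet\overline{\X}\le 0$ for each $k$ and $\H_0\bullet\widehat{\X}=\H_0\bullet\overline{\X}=1$, so $\widehat{\X}$ is feasible for \eqref{SOCPrelax} and attains $\Q_0\bullet\widehat{\X}=\Q_0\bullet\overline{\X}=\overline{\zeta}$. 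Therefore $\zeta\le\overline{\zeta}$, and equality follows.

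For the two structural claims I would use the inclusion $\SymT_+^{n+1}\subset\overline{\SymT}_+^{n+1}(\overline{E})$ together with $\zeta=\overline{\zeta}$. The optimal solution $\X^*$ of \eqref{SOCPrelax} is feasible for \eqref{SOCPsparsity} and has objective value $\zeta=\overline{\zeta}$, hence it is optimal for \eqref{SOCPsparsity}. For the final assertion I would rerun the completion argument for an \emph{arbitrary} $\widehat{\X}\in\widehat{\SymT}_+^{n+1}(\overline{E},\overline{\X})$: every such $\widehat{\X}$ lies in $\SymT_+^{n+1}$, agrees with $\overline{\X}$ on $\overline{E}\cup D$, and is therefore feasible for \eqref{SOCPrelax} with value $\Q_0\bullet\overline{\X}=\overline{\zeta}=\zeta$, so it is optimal for \eqref{SOCPrelax}.

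Since the opposite inequality and the set inclusions are already recorded in the text, the proof is mostly bookkeeping once support invariance is in hand. The only step that needs genuine care — and the main point to get right — is verifying that the completion supplied by Lemma~\ref{lem:SymT} and Remark~\ref{rema:SOCP-sparsity} preserves exactly the entries on $\overline{E}\cup D$ (and in particular the diagonal), since this is precisely what transports feasibility and objective value between \eqref{SOCPrelax} and \eqref{SOCPsparsity}.
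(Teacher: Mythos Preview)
Your proposal is correct and follows essentially the same approach as the paper's proof: both take an arbitrary completion $\widehat{\X}\in\widehat{\SymT}_+^{n+1}(\overline{E},\overline{\X})$, observe (via Remark~\ref{rema:SOCP-sparsity} and the proof of Lemma~\ref{lem:SymT}) that it lies in $\SymT_+^{n+1}$ and agrees with $\overline{\X}$ on $\overline{E}\cup D$, and then use the support of the $\Q_k$ to transport feasibility and objective value to \eqref{SOCPrelax}. Your explicit statement of the ``support invariance'' is a slight expository improvement over the paper, which invokes the same fact implicitly through Remark~\ref{rema:SOCP-sparsity}.
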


\begin{proof}
    First, we show that any matrix $\widehat{\X} \in \widehat{\SymT}_+^{n+1}(\overline{E}, \overline{\X})$ is an optimal solution 
    of \eqref{SOCPrelax}.
    From Remark~\ref{rema:SOCP-sparsity}, it follows that %indicates
    $\Q_0 \bullet \widehat{\X} = \Q_0 \bullet \overline{\X} = \overline{\zeta}$,
    $\Q_k \bullet \widehat{\X} = \Q_k \bullet \overline{\X}$ for $k = 1, \ldots, m$,
    and $\H_0 \bullet \widehat{\X} = \H_0 \bullet \overline{\X}$. 
    As discussed in the proof of Lemma~\ref{lem:SymT},  we have
   $ \widehat{\X} \in \SymT_+^{n+1}$,
   therefore, $\widehat{\X}$ is a feasible solution of \eqref{SOCPrelax}.
   Since its objective value $\Q_0 \bullet \widehat{\X}$ is $\overline{\zeta}$,
   $\overline{\zeta} \ge \zeta$ holds.
   By combining this with $\zeta \ge \overline{\zeta}$,
   we know that $\zeta = \overline{\zeta}$, and
    this implies $\widehat{\X}$ is an optimal solution of \eqref{SOCPrelax}.

	Finally, $\X^*$ is a feasible solution of \eqref{SOCPsparsity}
	from $\SymT_+^{n+1} \subset \overline{\SymT}_+^{n+1}(\overline{E})$,
	and its objective value is $\zeta$. Therefore,  $\zeta = \overline{\zeta}$ leads to the conclusion that $\X^*$ is also an optimal solution of \eqref{SOCPsparsity}.    
\end{proof}

Lemma~\ref{lem:completion} from \cite{fukuda2001exploiting} shows that 
the completed matrix by the completion procedure has the maximum determinant among all possible matricies.
To discuss a similar maximum-determinant property in the framework of \eqref{SOCPrelax}, we 
need to introduce the determinant 
for $\SymT_+^{n+1}$.

From the   self-concordant function  discussed in \cite{nesterov1994interior} 
%solid foundations
  for the theoretical analysis of interior-point methods,
we see that the standard self-concordant barrier function at $\X \in \SymMat_{++}^{n+1}$ 
%is of
takes the form of $-\log\det \X$.
In $\SymT_+^{n+1}$, we have multiple positive semidefinite %ness of  the 
matrices $\X^{ij}$ for $(i,j) \in J$, thus, if  interior-point methods are applied
to \eqref{SOCPrelax}, %we can use 
$\sum_{(i,j) \in J} (-\log\det \X^{ij}) 
= -\log \left(\Pi_{(i,j) \in J} \det \X^{ij}\right)$
can be used as a self-concordant barrier function.
%From this analogy, 
Using the analogy,
we define the determinant for $\X \in \SymT_+^{n+1}$
by 
\begin{equation}
{\det}_{\SymT} \X := \Pi_{(i,j) \in J} \det \X^{ij}. \label{eq:detT}
\end{equation}
Note that $\X^{ij} \succeq  \O$ is equivalent to a second-order cone constraint 
$\frac{X_{ii} +X_{jj}}{2} \geq \left\Vert \left( \begin{array}{c} \frac{X_{ii} -X_{jj}}{2} \\ X_{ij} \end{array} \right)  \right\Vert$. 
Let %we denote by
 ${\det}_{\text{SOCP}}$ denote the determinant of $\v = (v_0, v_1, v_2)^\top \in \mathbb{R}$ for a second-order cone
$v_0 \ge \sqrt{v_1^2 + v_2^2}$ defined in \cite{alizadeh2003second}. Then,  % it is given as 
$\det_{\text{SOCP}} (\v) = v_0^2 - v_1^2 - v_2^2$,
and %thus it follows that %  \newline 
$\det_{\text{SOCP}}\left( \left(\frac{X_{ii} +X_{jj}}{2},  \frac{X_{ii} -X_{jj}}{2},  X_{ij}\right)^\top \right) =  \det \X^{ij}$ 
follows.
Thus, the definition of ${\det}_{\SymT}$ in \eqref{eq:detT} is also consistent with 
$\det_{\text{SOCP}}$ in \cite{alizadeh2003second}.

For $\overline{\X} \in \overline{\SymT}_+^{n+1}(\overline{E})$,
we note that the set $\SymMat^{n+1}(\overline{E}, \overline{\X}, ?) \cap 
\SymMat^{n+1}(\overline{E}, 0)$ consists of only one matrix,
which will be denoted as $\overline{\X}^{\cap 0}$.
Similarly to Lemma~\ref{lem:completion}, we can show % that 
the maximum-determinant property of $\overline{\X}^{\cap 0}$.

\begin{theorem}\label{theo:SOCPcompletion}
The matrix $\overline{\X}^{\cap 0}$ 
has the maximum determinant among all possible matrix completion 
of $\overline{\X} \in \overline{\SymT}_+^{n+1}(\overline{E})$,
i.e.,
\begin{equation}
{\det}_{\SymT} \overline{\X}^{\cap 0}
= \max \left\{ {\det}_{\SymT} \widehat{\X} \ \middle| \ 
\widehat{\X} \in \widehat{\SymT}_+^{n+1}(\overline{E}, \overline{\X}) \right\}.
\end{equation}
\end{theorem}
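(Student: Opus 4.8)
The plan is to exploit the product form of ${\det}_{\SymT}$ in \eqref{eq:detT} so that the maximization decouples into independent scalar problems, one for each off-diagonal index pair. First I would fix an arbitrary completion $\widehat{\X} \in \widehat{\SymT}_+^{n+1}(\overline{E}, \overline{\X})$ and write
\begin{equation}
{\det}_{\SymT} \widehat{\X} = \prod_{(i,j) \in J} \det \widehat{\X}^{ij} = \prod_{(i,j) \in J} \left( \widehat{X}_{ii} \widehat{X}_{jj} - \widehat{X}_{ij}^2 \right).
\end{equation}
By definition of $\widehat{\SymT}_+^{n+1}(\overline{E}, \overline{\X})$, every such $\widehat{\X}$ belongs to $\SymMat^{n+1}(\overline{E}, \overline{\X}, ?)$, so its diagonal entries $\widehat{X}_{ii} = \overline{X}_{ii}$ and its on-pattern off-diagonal entries $\widehat{X}_{ij} = \overline{X}_{ij}$ for $(i,j) \in \overline{E}$ are common to all completions; only the entries $\widehat{X}_{ij}$ with $(i,j) \in J \setminus \overline{E}$ vary, and each ranges freely over $R_{ij}(\overline{\X})$.

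Next I would split the product into the factors indexed by $\overline{E}$, which are constant across all completions, and those indexed by $J \setminus \overline{E}$. For each free factor, $\widehat{X}_{ii}\widehat{X}_{jj} - \widehat{X}_{ij}^2$ depends only on the single variable $\widehat{X}_{ij}$ (the diagonals being fixed) and is maximized precisely at $\widehat{X}_{ij} = 0$, attaining $\widehat{X}_{ii}\widehat{X}_{jj}$. Since $0 \in R_{ij}(\overline{\X})$ for every $(i,j) \in J \setminus \overline{E}$ (because $\overline{X}_{ii} \ge 0$ follows from $\overline{\X} \in \overline{\SymT}_+^{n+1}(\overline{E})$), the simultaneous choice $\widehat{X}_{ij} = 0$ for all $(i,j) \in J \setminus \overline{E}$ is feasible and yields exactly $\overline{\X}^{\cap 0}$, which therefore lies in $\widehat{\SymT}_+^{n+1}(\overline{E}, \overline{\X})$ and attains the claimed value.

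The step requiring care is the passage from \emph{maximize each factor} to \emph{maximize the product}, which is legitimate only because every factor is nonnegative. This is where I would invoke the argument already used in the proof of Lemma~\ref{lem:SymT}: each $\widehat{\X} \in \widehat{\SymT}_+^{n+1}(\overline{E}, \overline{\X})$ actually lies in $\SymT_+^{n+1}$, so $\widehat{\X}^{ij} \succeq \O$ and hence $\det \widehat{\X}^{ij} \ge 0$ for every $(i,j) \in J$; likewise the constant factors over $\overline{E}$ are nonnegative since $\overline{\X}^{ij} \succeq \O$ there. Replacing each free factor $\widehat{X}_{ii}\widehat{X}_{jj} - \widehat{X}_{ij}^2$ by its upper bound $\widehat{X}_{ii}\widehat{X}_{jj}$ can then only increase the nonnegative product, giving ${\det}_{\SymT} \widehat{\X} \le {\det}_{\SymT} \overline{\X}^{\cap 0}$ for every admissible $\widehat{\X}$, with equality at $\overline{\X}^{\cap 0}$. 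The only genuine subtlety beyond this bookkeeping is confirming that all factors are nonnegative, which is exactly what makes the entrywise optimum globally optimal; once that is in place, the monotonicity of the product is immediate and completes the proof.
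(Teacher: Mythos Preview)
Your proof is correct and follows essentially the same approach as the paper: split $\prod_{(i,j)\in J}\det\widehat{\X}^{ij}$ into the fixed factors over $\overline{E}$ and the free factors over $J\setminus\overline{E}$, bound each free factor $\overline{X}_{ii}\overline{X}_{jj}-\widehat{X}_{ij}^2$ above by $\overline{X}_{ii}\overline{X}_{jj}$, and observe that equality is attained at $\widehat{X}_{ij}=0$. Your explicit remark that the factorwise bound transfers to the product because all factors are nonnegative is exactly the point the paper records at the end of its chain of inequalities.
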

\begin{proof}
    For $\widehat{\X} \in \widehat{\SymT}_+^{n+1}(\overline{E}, \overline{\X})$, we have
    \begin{align}
    \begin{array}{lcl}
    {\det}_{\SymT} \widehat{\X} &=& \Pi_{(i,j) \in J} 
    (\widehat{X}_{ii} \widehat{X}_{jj} - \widehat{X}_{ij}^2) \\
    &=& \Pi_{(i,j) \in \overline{E}} 
(\widehat{X}_{ii} \widehat{X}_{jj} - \widehat{X}_{ij}^2)
\Pi_{(i,j) \notin \overline{E}} 
(\widehat{X}_{ii} \widehat{X}_{jj} - \widehat{X}_{ij}^2)  \\
    &=& \Pi_{(i,j) \in \overline{E}} 
(\overline{X}_{ii} \overline{X}_{jj} - \overline{X}_{ij}^2)
\Pi_{(i,j) \notin \overline{E}} 
(\overline{X}_{ii} \overline{X}_{jj} - \widehat{X}_{ij}^2)  \\
    &\le& \Pi_{(i,j) \in \overline{E}} 
(\overline{X}_{ii} \overline{X}_{jj} - \overline{X}_{ij}^2)
\Pi_{(i,j) \notin \overline{E}} 
(\overline{X}_{ii} \overline{X}_{jj})  \\
    &=& \Pi_{(i,j) \in \overline{E}} 
(\overline{X}_{ii}^{\cap 0} \overline{X}^{\cap 0}_{jj} - (\overline{X}^{\cap 0}_{ij})^2)
\Pi_{(i,j) \notin \overline{E}} 
(\overline{X}^{\cap 0}_{ii} \overline{X}^{\cap 0}_{jj} - (\overline{X}^{\cap 0}_{ij})^2)  \\
&=& {\det}_{\SymT} \overline{\X}^{\cap 0}.
 \end{array}
\end{align}
For the third equality, we have used $\widehat{X}_{ij} = \overline{X}_{ij}$ 
for $(i,j) \in \overline{E} \cup D$. The fourth equality is derived from
$\overline{\X}^{\cap 0} \in \SymMat^{n+1}(\overline{E}, 0)$.    
Note that 
$\overline{X}_{ii} \overline{X}_{jj} - \widehat{X}_{ij}^2 \ge 0$ holds 
for $(i,j) \notin \overline{E}$ since $\widehat{X}_{ij} \in R_{ij}(\overline{\X})$.
\end{proof}

%The combination of 
Theorems~\ref{theo:SOCPsparsity} and \ref{theo:SOCPcompletion} show that %we can obtain 
an optimal solution of \eqref{SOCPrelax} as $\overline{\X}^{\cap 0}$ can be obtained 
by solving \eqref{SOCPsparsity} and substituting 0 in the elements in $J \backslash \overline{E}$.
In view of computational time, 
we can expect that \eqref{SOCPsparsity} is more efficient 
than \eqref{SOCPrelax}, since the number of second-order constraints 
in \eqref{SOCPsparsity} is less than that of 
\eqref{SOCPrelax}. % We will verify % this efficacy
In Section~\ref{sec:experiments}, numerical results will be presented to verify the expected efficiency. 
%with numerical experiments.

Compared with the matrix completion in SDP, the matrix completion in SOCP has two advantages:
%we can find two advantages of the completion in SOCP.
First, the constraints in \eqref{SOCPsparsity} is determined by the aggregate sparsity pattern $\overline{E}$, 
therefore, %we do not need 
the chordal extension $\widehat{E}$ is not necessary.
Since $\widehat{E}$ has more edges than $\overline{E}$ if
$\overline{E}$ is not chordal, it is a clear advantage for \eqref{SOCPsparsity}.
Secondly, the completion procedure in SOCP is to substitute 0 in the elements in $J \backslash \overline{E}$. 
This procedure is much simpler than the procedure of SDP 
where we need to compute matrices recursively with the maximal cliques $\{C_1, \ldots, C_p\}$.

\subsection{Dual problems}\label{sec:dual}

%Next, we focus
We investigate the relation between \eqref{SOCPrelax} and \eqref{SOCPsparsity}
with %the viewpoint of
 their dual problems.
The dual of % problem corresponding to
 \eqref{SOCPrelax}  is
\begin{align}
\text{maximize:} \ \ & \xi \nonumber \\
\text{subject to:} \ \ & \Q_0 + \sum_{k=1}^m \Q_k y_k 
- \H_0 \xi - \sum_{(i,j) \in J} \W^{ij} = \O \nonumber \\
& \y \in \Real_+^m, \ \xi \in \Real \nonumber \\ 
& \W^{ij} \in \SymMat_{+}^{n+1, \{i,j\}}  \ \text{for} \  (i,j) \in J, 
\label{dualSOCPrelax}
\end{align}
where
\begin{equation}
\SymMat_{+}^{n+1, \{i,j\}} := 
\left\{\W \in \SymMat^{n+1}_+ \| 
W_{kl} = 0 \ \text{for} \ (k,l) \notin \{(i,i), (i,j), (j,i), (j,j)\}
\right\}.
\end{equation}
% Particularly, in $\W^{ij}$, only the four elements $W^{ij}_{ii}, W^{ij}_{ij},
% W^{ij}_{ji}, W^{ij}_{jj}$ can be nonzero.
Similarly, the dual of \eqref{SOCPsparsity} is:
\begin{align}
\text{maximize:} \ \ & \xi \nonumber \\
\text{subject to:} \ \ & \Q_0 + \sum_{k=1}^m \Q_k y_k 
- \H_0 \xi - \sum_{(i,j) \in \overline{E}} \W^{ij} - 
\sum_{i\in V_e} w_i \e_i \e_i^T = \O \nonumber \\
& \y \in \Real_+^m, \ \xi \in \Real \nonumber \\ 
& \W^{ij} \in \SymMat_{+}^{n+1, \{i,j\}} \ \text{for} \  (i,j) \in \overline{E} \\
& w_i \ge 0 \ \text{for} \ i \in V_e,
\label{dualSOCPsparisity}
\end{align}
where $\e_i$ is the $i$th unit vector in $\Real^{n+1}$.

In Theorem~4, we have discussed the relation between the primal problems
\eqref{SOCPrelax} and \eqref{SOCPsparsity}.
We show the relation between the two dual problems  in the following 
theorem.

\begin{theorem}\label{theo:dualSOCPsparsity}
%We can construct 
Each 
feasible solution of  \eqref{dualSOCPrelax} (or \eqref{dualSOCPsparisity}) can be converted to 
a feasible solution of \eqref{dualSOCPsparisity} (or \eqref{dualSOCPrelax}, respectively) 
while maintaining the objective value. % vice versa.        
\end{theorem}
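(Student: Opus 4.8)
The plan is to treat the two dual constraints as two different ways of writing the same matrix equation. Set $\M := \Q_0 + \sum_{k=1}^m y_k \Q_k - \xi \H_0$, which is common to both \eqref{dualSOCPrelax} and \eqref{dualSOCPsparisity}. The first observation, and the one that makes everything work, is that by the definition of the aggregate sparsity pattern $\overline{E}$ we have $[\Q_k]_{ij}=0$ for every $k$ whenever $(i,j)\notin\overline{E}$ with $i\neq j$; since $\H_0$ has no off-diagonal entries either, the off-diagonal entries of $\M$ vanish outside $\overline{E}$. Both constraints then read ``$\M$ equals a sum of blocks $\W^{ij}\in\SymMat_{+}^{n+1,\{i,j\}}$ (plus, in \eqref{dualSOCPsparisity}, the diagonal terms $w_i\e_i\e_i^\top$)'', and the whole theorem reduces to matching the two bookkeeping schemes for the diagonal and for the edges outside $\overline{E}$, always keeping $\xi$ and $\y$ (hence the objective $\xi$) fixed.

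For the direction \eqref{dualSOCPsparisity} $\to$ \eqref{dualSOCPrelax}, I would keep $\xi$, $\y$ and the blocks $\W^{ij}$ for $(i,j)\in\overline{E}$ unchanged, and manufacture the remaining blocks $\W^{ij}$ for $(i,j)\notin\overline{E}$ out of the diagonal terms. Concretely, each $w_i\e_i\e_i^\top$ with $i\in V_e$ is placed on the diagonal of some block $\W^{ij}$ for an edge $(i,j)\notin\overline{E}$ incident to $i$ (such an edge exists since $n+1\ge 2$; if two vertices are routed to the same edge their weights are simply added on that block). Every block built this way is a nonnegative diagonal matrix, hence lies in $\SymMat_{+}^{n+1,\{i,j\}}$, and it contributes nothing to the off-diagonal positions outside $\overline{E}$, which is exactly what is required since $\M$ vanishes there. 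Summing, the blocks over all of $J$ reproduce $\M$, so the constructed point is feasible for \eqref{dualSOCPrelax} with the same objective value.

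The reverse direction \eqref{dualSOCPrelax} $\to$ \eqref{dualSOCPsparisity} is where the only real work lies. Here, for each $(i,j)\notin\overline{E}$ the block $\W^{ij}$ is the only contributor to the off-diagonal position $(i,j)$ of $\M$, and $\M_{ij}=0$ forces $[\W^{ij}]_{ij}=0$; being positive semidefinite and supported on $\{i,j\}$, such a block must be a nonnegative diagonal matrix. Let $\D:=\sum_{(i,j)\notin\overline{E}}\W^{ij}$, a nonnegative diagonal matrix. I would then split $\D$ according to $V_e$: for $i\in V_e$ set $w_i:=D_{ii}\ge 0$, recovering the diagonal terms of \eqref{dualSOCPsparisity}; for $i\notin V_e$ the leftover diagonal mass $D_{ii}$ must be re-absorbed into the blocks indexed by $\overline{E}$. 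This is possible precisely because, by the definition of $V_e$, every $i\notin V_e$ is incident to at least one edge of $\overline{E}$: pick one such edge and replace its block $\W^{ij}$ by $\W^{ij}+D_{ii}\e_i\e_i^\top$. Adding a nonnegative diagonal to a positive semidefinite block supported on $\{i,j\}$ keeps it in $\SymMat_{+}^{n+1,\{i,j\}}$, so feasibility of the cone constraints is preserved, and by construction the new blocks together with $\sum_{i\in V_e}w_i\e_i\e_i^\top$ sum to $\M$.

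The main obstacle is this re-absorption step in the second direction: one must check that the diagonal contributions coming from edges outside $\overline{E}$ can be redistributed without creating spurious off-diagonal entries and without violating any positive semidefinite block constraint. Both are guaranteed by the two structural facts highlighted above, namely that the excess blocks are purely diagonal and that every vertex outside $V_e$ has an incident $\overline{E}$-edge to absorb its diagonal mass. Since neither conversion alters $\xi$, the objective value is maintained throughout, which completes the argument.
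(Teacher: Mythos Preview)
Your proposal is correct and follows essentially the same strategy as the paper: introduce the common matrix $\M=\Q_0+\sum_k y_k\Q_k-\xi\H_0$, observe that its off-diagonal entries vanish outside $\overline{E}$, deduce that the blocks $\W^{ij}$ with $(i,j)\notin\overline{E}$ are nonnegative diagonal, and then redistribute that diagonal mass onto edges of $\overline{E}$ (for vertices $i\notin V_e$) or onto the scalars $w_i$ (for $i\in V_e$). Your write-up is in fact a bit more explicit than the paper's on the redistribution step, carefully picking one incident edge per vertex rather than writing a single case formula.
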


\begin{proof}
Suppose that $\widehat{\y}, \widehat{\xi}$ and $\widehat{\W}^{ij}$ for $(i,j) \in J$ is a feasible solution of \eqref{dualSOCPrelax}.
Let $\widehat{\Q}:= \Q_0 + \sum_{k=1}^m \Q_k \widehat{y}_k
- \H_0 \widehat{\xi}$,
then it holds that $\widehat{\Q} \in \SymMat^{n+1}(\overline{E}, 0)$.
This indicates that if $(i,j) \in J \backslash \overline{E}$, then $\widehat{\W}^{ij}$ is a diagonal matrix.
Therefore, we can find appropriate $\overline{\W}^{ij} \in \SymMat_+^{n+1,\{i,j\}}$ for each $(i,j) \in \overline{E}$ such that 
$\widehat{\Q} - \sum_{(i,j) \in \overline{E}} \overline{\W}^{ij}$ 
is a diagonal matrix and nonnegative diagonal elements appear only in $V_e$,
by distributing the nonnegative diagonal elements of
$\widehat{\W}^{ij}$ of $(i,j) \in J \backslash \overline{E}$ to 
some $\widehat{\W}^{ij}$ of $(i,j) \in \overline{E}$.
If we denote the nonnegative diagonal elements by $\widehat{w}_i$ for $i \in V_e$,
we know that 
the solution with % composed by
$\widehat{\y}, \widehat{\xi}$, $\overline{\W}^{ij}$ for $(i,j) \in \overline{E}$
and $\widehat{w}_i$ for $i \in V_e$ is a feasible solution
of \eqref{dualSOCPsparisity}. % and its objective value is $\widehat{\xi}$.

The opposite direction can be derived from the fact that 
\begin{equation}
\left\{\sum_{(i,j) \in \overline{E}} \W^{ij} 
+ \sum_{i\in V_e} w_i \e_i \e_i^T  \middle| 
\begin{array}{l}
\W^{ij} \in \SymMat_{+}^{n+1} \cap \SymMat^{n+1, \{i,j\}} \ \text{for} \  (i,j) \in \overline{E}, \\
w_i \ge 0 \ \text{for} \ i \in V_e 
\end{array}
\right\}
\end{equation}
is a subset of 
\begin{equation}
\left\{\sum_{(i,j) \in J} \W^{ij} 
+ \sum_{i\in V_e} w_i \e_i \e_i^T  \ \middle| \ 
\W^{ij} \in \SymMat_{+}^{n+1, \{i,j\}} \ \text{for} \  (i,j) \in J\right\}.
\end{equation}
In fact, if $\overline{\W}^{ij}$ for $(i,j) \in \overline{E}$ and 
$\overline{w}_i$ for $i \in V_e$ are a feasible  solution of \eqref{dualSOCPsparisity},
a feasible solution of \eqref{dualSOCPrelax} can be constructed by assigning the values as %setting
\begin{align}
\W^{ij} = \left\{
\begin{array}{lcl}
 \overline{\W}^{ij} + \overline{w}_i \e_i\e_i^T &\text{if} & (i,j) \in \overline{E} \ \text{and} \ i \in V_e \\
\overline{\W}^{ij} &\text{if} & (i,j) \in \overline{E} \  \text{and} \ i \notin V_e  \\
 \overline{w}_i \e_i\e_i^T &\text{if} & (i,j) \notin \overline{E} \ \text{and} \ i \in V_e \\
\O &\text{if} & (i,j) \notin \overline{E} \ \text{and} \ i \notin V_e,
\end{array}
\right.\label{eq:SOCPrecover}
\end{align}
and keeping the values of the other variables.
\end{proof}
A direct consequence of Theorem \ref{theo:dualSOCPsparsity} is that  % indicates that 
%To solve \eqref{dualSOCPrelax},
we can solve \eqref{dualSOCPsparisity} instead of solving \eqref{dualSOCPrelax} and recover the optimal solution
of \eqref{dualSOCPrelax} using \eqref{eq:SOCPrecover}.

%=========Section4 Application========

\section{Numerical experiments}
\label{sec:experiments}

Numerical experiments on exploiting the sparsity of the SOCP relaxation were conducted with
two test problems, the lattice and  pooling problem. For the test problems,   the SOCP relaxation~\eqref{SOCPrelax} 
is known to  provide
the same optimal value as that of the SDP relaxation~\eqref{SDPrelax} due to the structure of the data matrices.
The aim of the numerical experiments is to observe
how much computational time can be reduced by exploiting the aggregate sparsity of the SOCP relaxation
for the same optimal values by the SDP and SOCP relaxation.

%We use two examples of the SOCP relaxation for numerical experiments,
%lattice problems and pooling problems.
%We can generate these problems so that the optimal value of the SOCP relaxation~\eqref{SOCPrelax}
%is equal to that of the SDP relaxation~\eqref{SDPrelax}.
%A main focus in the numerical experiments is the computation time reduction
%by the exploitation of the aggregate sparsity pattern in the SOCP relaxation.

The numerical experiments were performed using MATLAB R2018a
on a MacBook Pro with an Intel Core i7 processor (2.8GHz) and 16 GB memory space. 
The  relaxation problems of the lattice and pooling problem were solved by SeDuMi~\cite{sturm1999using} and 
 MOSEK version 8.1.0.72~\cite{mosek2015mosek}, respectively.
 SeDuMi could not handle the pooling problem as it required too much memory. 
%The SeDuMi format is often used for SDP and SOCP researches. In particular, 
To test with SeDuMi and MOSEK, the  SDP and SOCP relaxations of the test problems should be converted into the following input format:
\begin{align} \nonumber
\begin{array}{rllrll}
(P) & \text{minimize:} \ \ & \c^\top \x & (D) & \text{maximize:} \ \ & \b^\top \y \\
& \mbox{subject to:} \ \ &\mathcal{A}\x = \b, \x \in \mathcal{K}  &	&    \mbox{subject to:} \ \ &\c - \mathcal{A}^\top \y \in \mathcal{K}.
\end{array}
\end{align} 
Here, 
$\mathcal{K}$ stands for a Cartesian product of a linear cones, second-order cones and 
positive semidefinite cones. $\mathcal{\A}$ is a linear map and its 
adjoint operator is denoted with $\mathcal{\A}^\top$.
%For more details, we refer the readers to the users' manual of SeDuMi. 
Note that  the SDP relaxation problems  (\eqref{SDPrelax} and  \eqref{SDPrelaxDC}) and
the SOCP relaxation problems (\eqref{SOCPrelax} and \eqref{SOCPsparsity}) can be formulated as either $(P)$ or $(D)$.

%To compare the sizes of the relaxation problems, the input format for SeDuMi 
%SeDuMi~\cite{sturm1999using} accepts two standard forms of conic programming problems (a standard equality form $(P)$ and its dual form $(D)$) as below,
%and we have a choice between $(P)$ or $(D)$ into which we formulate the SDP relaxation problems (\eqref{SDPrelax} and \eqref{SDPrelaxDC}) and the SOCP relaxation problems (\eqref{SOCPrelax} and \eqref{SOCPsparsity}).
%

%\begin{align} \nonumber
%\begin{array}{rllrll}
%(P) & \text{minimize:} \ \ & \c^\top \x & (D) & \text{maximize:} \ \ & \b^\top \y \\
%& \mbox{subject to:} \ \ &\mathcal{A}\x = \b, \x \in \mathcal{K}  &	&    \mbox{subject to:} \ \ &\c - \mathcal{A}^\top \y \in \mathcal{K}.
%\end{array}
%\end{align} 
%Here, 
%$\mathcal{K}$ stands for a Cartesian product of a linear cones, second-order cones and 
%positive semidefinite cones, while $\mathcal{\A}$ is a linear map whose adjoint operator is denoted with $\mathcal{\A}^\top$.
%For more details, we refer the readers to the users' manual of SeDuMi.
%we generate the values of coefficient with random number.

% We can formulate the relaxation problems in both primal standard form $(P)$ and dual standard form $(D)$. 

To denote which of $(P)$ or $(D)$ is used, we use the notation described in Table~\ref{table:shortname}
for the relaxation problems in the primal form $(P)$.
\begin{table}[tbp]
	\caption{Relaxation problems in the primal form $(P)$} \label{table:shortname}
	\begin{tabular}{l|l}
		\hline
		F-SOCP ($P$) & the full SOCP relaxation \eqref{SOCPrelax} formulated as ($P$) \\
		S-SOCP ($P$) & the sparse SOCP relaxation \eqref{SOCPrelax} formulated as ($P$) \\
		F-SDP ($P$) & the full SDP relaxation \eqref{SDPrelax} formulated as ($P$) \\
		S-SDP ($P$) & the sparse SDP relaxation formulated as ($P$) \\
		\hline 
	\end{tabular}
\end{table}
Similarly, for the dual form $(D)$,  F-SOCP ($D$), S-SOCP ($D$),
F-SDP ($D$), and S-SDP ($D$) are used.
We mention that  S-SDP ($P$) and S-SDP ($D$), which employ the chordal sparsity discussed in Section~\ref{sec:pre},
can be obtained   by
executing SparseCoLO~\cite{fujisawa2009user}.

%==========Lattice problem ============
\subsection{The lattice problem}
%Let $n_L$ be a positive integer. % , and let $n = n_L^2$.
We first describe how to generate
a QCQP for the lattice problem with the size $n=n_L^2$ where  $n_L$ is a positive integer. 
Consider
\begin{align}
\mbox{minimize:} \ \ &\x^\top \P_{0} \x  \nonumber \\
\mbox{subject to:} \  \ &\x^\top \P_{k} \x  + r_k \leq 0  \ ( k=1, ... , m). \label{latticeQCQP}
\end{align}
Here $\x \in \Real^n$ is the decision variable.
For the coefficient matrices $\P_0, \P_1, \ldots, $ $\P_m \in \SymMat^{n}$,  a lattice graph 
$G(V_L, E_L)$  is considered with the vertex set $V_L = \{1, 2, \ldots, n_L^2\}$ 
and the edge set 
\begin{equation}
\begin{array}{rcl}
E_L &=& \left\{ ( (i_L-1)n_L +j_L , (i_L-1)n_L +(j_L+1) ) \middle| i_L =1,2,...n_L , j_L =1,2,...,n_L-1 \right\} \\
& & \cup \left\{ ( (i_L-1)n_L +j_L, i_L n_L +j_L) \middle| i_L =1,2,...,n_L-1 , j_L=1,2,...,n_L \right\}.
\end{array}
\end{equation}
Figure~\ref{fig:lattice} illustrates the lattice graph $G(V_L, E_L)$ with $n_L = 4$.
Test instances with the  lattice graph were also used in \cite{yamashita2015fast}.

For $k=0,2,\ldots,m$, %we generated
$[\P_k]_{ij}$ was generated randomly in the interval $[-1, 0]$
for each $(i,j) \in E_L$, and the diagonal elements
%For the diagonal elements, %we generated 
$[\P_k]_{ii}$ was generated randomly in the interval $[-1, 1]$ for each $i = 1, \ldots, n+1$.
For $k=1$,  randomly generated number in the interval $(0, 1]$ %we generated 
was used as $[\P_1]_{ii}$ for $i = 1, \ldots, n+1$.
The other elements in $\P_0, \ldots, \P_m$ were set to zeros.
For $r_1, \ldots, r_k$, %we chose 
negative random  values were used
so that $\x = \0$ could be feasible for the lattice QCQP \eqref{latticeQCQP}.
The feasible region of \eqref{latticeQCQP} is bounded 
since $\P_1$ is a positive diagonal matrix.
Note that % the non-positivity of the off-diagonal elements satisfies
the assumption in
Theorems~3.4 and 3.5 of \cite{kim2003exact} is satisfied by the non-positivity of the off-diagonal elements, thus
we can obtain the exact optimal value of the lattice QCQP \eqref{latticeQCQP}
by the SDP relaxation \eqref{SDPrelax} or the SOCP relaxation \eqref{SOCPrelax}.

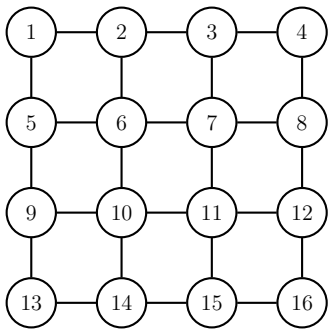
\begin{figure}[tbp]
	 \centering
	 \begin{tikzpicture}
	 [roundnode/.style ={circle,draw,minimum size=3mm, inner sep=2pt, text width=7mm, align=center},thick,scale=0.6, every node/.style={scale=0.7}]
	 %\node[roundnode] (maintopic) {9} ;
	 \node[roundnode] (1) at(0,0){1};
	 \node[roundnode] (2) at(2,0){2};
	 \node[roundnode] (3) at(4,0){3};
	  \node[roundnode] (4) at(6,0){4};
	  \node[roundnode] (5) at(0,-2){5};
	 \node[roundnode] (6) at(2,-2){6};
	 \node[roundnode] (7) at(4,-2){7};
	  \node[roundnode] (8) at(6,-2){8};
	  \node[roundnode] (9) at(0,-4){9};
	 \node[roundnode] (10) at(2,-4){10};
	 \node[roundnode] (11) at(4,-4){11};
	  \node[roundnode] (12) at(6,-4){12};
	   \node[roundnode] (13) at(0,-6){13};
	 \node[roundnode] (14) at(2,-6){14};
	 \node[roundnode] (15) at(4,-6){15};
	  \node[roundnode] (16) at(6,-6){16};
	  
	  \draw (1) to (2) to (3) to (4)
	  	   (5) to (6) to (7) to (8)
		   (9) to (10) to (11) to (12)
		   (13) to (14) to (15) to (16)
		   (1) to (5) to (9) to (13)
		   (2) to (6) to (10) to (14)
		   (3) to (7) to (11) to (15)
		   (4) to (8) to (12) to (16);
	 %\node[text width=8cm]at (0,-8){Figure1. This is an example};
	\end{tikzpicture}
	\caption{A lattice graph for $n_L = 4$.}\label{fig:lattice}
\end{figure}

In Table~\ref{table:1}, we compare  F-SOCP $(P)$ and S-SOCP $(P)$ to see 
the effect of exploiting sparsity in the SOCP relaxation formulated as $(P)$.
The first and second columns show
 the number of constraints and variables, respectively. 
 The third and fourth columns report the computational time in seconds 
and the objective values obtained.
    
	%======table1 Primal standard form results
	\begin{table}[tbp]
	\caption{Comparing F-SOCP ($P$) and S-SOCP ($P$)}  \label{table:1}
	\begin{adjustwidth}{-.5in}{-.5in}
	\centering
	\scriptsize
	\resizebox{\textwidth}{!}{
	\begin{tabular}{|c|c|c|c|c|c|}
	\hline
% 	\multicolumn{6}{|c|}{Primal standard form} \\
% 	\hline
	
%	Number of &Number of &\multicolumn{2}{|c|}{CPU time(secs)} &\multicolumn{2}{|c|}{Objective value} \\
%	\cline{3-6}
%	constraints(m) &variables($n^2$) &{\em full\_lattice} &{\em sparse\_lattice} &{\em full\_lattice} &{\em sparse\_lattice} \\
	Number of &Number of &\multicolumn{2}{c|}{CPU time (s)} &\multicolumn{2}{c|}{Objective value} \\
	\cline{3-6}
	 constraints ($m$) &variables ($n = n_L^2$) & F-SOCP ($P$) & S-SOCP ($P$) & F-SOCP ($P$) & S-SOCP ($P$) \\
% 
% 	 &  & &{\em socp(P)} &{\em socp(P)} &{\em socp(P)}  \\
	\hline
	15 & $64^2$ & 13.78 & 0.20 & -556.56 & -556.56 \\
	\hline
	20 & $81^2$ & 88.91 & 0.12 & -415.75 & -415.75 \\
	\hline
	30 & $100^2$ & 315.61 & 0.20 & -996.37 & -996.37 \\
	\hline
	40 & $121^2$ & 1292.46 & 0.33 & -2481.84 & -2481.84 \\
	\hline
	50 & $144^2$ & 4594.44 & 0.50 & -1157.12 & -1157.12 \\
	\hline
	120 & $169^2$ & 15397.13 & 1.71 & -3308.35 & -3308.35 \\
	\hline
	\end{tabular}
	}
	\end{adjustwidth}	
\end{table}
	
We also observe in Table~\ref{table:1} that 
F-SOCP $(P)$ and S-SOCP $(P)$ attained the same objective value 
as shown in Theorem~\ref{theo:SOCPsparsity}, but
S-SOCP $(P)$ is much more efficient for solving the SOCP relaxation problems than F-SOCP $(P)$.  
The number of  second-order cones for the constraint $\X \in \SymT_+^{n+1}$ in \eqref{SOCPrelax} is
 $\frac{n(n-1)}{2} = \frac{n_L^2 (n_L^2 -1)}{2}$, which is the fourth order of $n_L$.
In contrast, the number of second-order cones for  $\X \in \overline{\SymT}_+^{n+1}(\overline{E})$ in \eqref{SOCPsparsity}
 %we need 
 is $2n_L (n_L-1)$, which is the quadratic order of $n_L$. We see that
the difference in the numbers of second-order cones resulted in the large difference in CPU time shown in Table~\ref{table:1}.
	
%=====Table 2 Primal {Full SDP, Sparse SDP, Sparse SOCP}=======================
\begin{table}[tbp]
\caption{Numerical results on F-SDP $(P)$, S-SDP $(P)$, and S-SOCP $(P)$}
\label{table:2}
	\begin{adjustwidth}{-.5in}{-.5in}
	\centering
	\resizebox{\textwidth}{!}{
	\begin{tabular}{|c|c|c|c|c|c|c|c|}
%	\hline
%	\multicolumn{8}{|c|}{Primal standard form} \\
    \hline	
	Number of &Number of &\multicolumn{3}{c|}{CPU time (s)} &\multicolumn{3}{c|}{Objective value} \\
	\cline{3-8}
	 constraints ($m$)&variables ($n = n_L^2$) & F-SDP $(P)$ & S-SDP $(P)$ & S-SOCP $(P)$ & F-SDP $(P)$ & S-SDP $(P)$ & S-SOCP $(P)$ \\
	\hline
	1000 & $81^2$ & 4.79 & 6.77 & 6.85 & -396.15 & -396.15 &-396.15 \\
	\hline
	1000 & $100^2$ & 6.22 & 8.94 & 9.08 & -669.20 & -669.20 &-669.20 \\
	\hline
	1500 & $121^2$ & 19.12 & 26.79 & 27.89 & -2412.98  & -2412.98 &-2412.98\\
	\hline
	1500 & $144^2$ & 23.05 & 32.77 & 35.47 & -1108.57 & -1108.57 &-1108.57 \\
	\hline
	2000 & $169^2$ & 51.86 & 65.82 & 83.68 & -3043.22 & -3043.22 &-3043.22\\
	\hline
	2000 & $196^2$ & 53.92 & 75.04 & 104.88 & -5401.28 & -5401.28 &-5401.28\\
	\hline
	2500 & $225^2$ & 106.07 & 148.99 & 210.25 & -9921.85 & -9921.85 &-9921.85\\
	\hline
	\end{tabular}
	
	}
%	\resizebox{\textwidth}{!}{
	\end{adjustwidth}
	\end{table}
	%====================================================================
	
Table~\ref{table:2} displays the numerical results on F-SDP $(P)$, S-SDP $(P)$
and S-SOCP $(P)$ for larger $n_L$  than Table~\ref{table:1},
and it shows that F-SDP $(P)$ is the most efficient. % among the three. 
When we formulate the problem with $m=2500$ into $(P)$, the sizes of $\mathcal{A}$
in F-SDP $(P)$, S-SDP $(P)$, and S-SOCP $(P)$ are 
$2500 \times 53125$, $3026 \times 13322$ and $3340 \times 3985$, respectively.
In S-SDP $(P)$, the average size of maximal cliques is 23.42 %more accurately 23.4286
and the number of equality constraints 
to be added for the overlaps between cliques 
(the cardinality of $U$ in \eqref{SDPrelaxDC}) is 526.
The  number of the added equality constraints becomes  computational burden when solving S-SDP $(P)$, % \eqref{SDPrelaxDC},
 though we can reduce the number of variables compared with F-SDP $(P)$.
Next, S-SOCP $(P)$ includes many second-order cones. 
The dimension of each second-order cone in S-SOCP $(P)$ is three. We mention that handling many  cones of small size 
is known to be inefficient.
% to express $\X^{ij} \succeq \O$ in $(\P)$,
% we introduced three variables $v_1^{ij}, v_2^{ij}, v_3^{ij} \in \Real$ 
% which satisfy a second-order cone constraint, 
% $v_1^{ij} \ge \sqrt{(v_2^{ij})^2 + (v_3^{ij})^2}$,
% and linear constraints that connect these variables and the four elements in $\X^{ij}$. 
Therefore, in the standard form of $(P)$, S-SOCP $(P)$ is less efficient than F-SDP $(P)$.

%Next, our focus moves to 
Now, we consider the dual $(D)$.
In Table~\ref{table:3}, we compare four relaxations, 
F-SDP $(D)$, S-SDP $(D)$, 
F-SOCP ($D$) and S-SOCP ($D$).
As seen in Tables~\ref{table:1} and \ref{table:2},
the objective values from the  four problems are the same, so 
%we do not report
the objective values are not shown in Table~\ref{table:3}.
\begin{comment} % Yamashita memo: I am not sure the correctness of this part.
We mention that   %we prepared 
the data was prepared 
such that $m > \frac{n(n+1)}{2}$ for the experiment for Table~\ref{table:3}, since the rows of $\mathcal{A}$ should be linearly independent for numerical stability.
\end{comment}
%As a result, 
The computational advantage by exploiting the aggregate sparsity  in S-SOCP
over F-SOCP %shown in Table~\ref{table:3} 
is not as large as  Table~\ref{table:1}.    % I think here, Table~\ref{table:2} -> Table~\ref{table:1}.
However,  we can still observe the improved efficiency by exploiting the aggregate sparsity. 
% in \eqref{SOCPsparsity} compared with \eqref{SOCPrelax}.

When  the problem with $m=60000$ is formulated as the dual $(D)$, 
the sizes of $\mathcal{A}$
in F-SDP $(D)$, S-SDP $(D)$, F-SOCP $(D)$ and S-SOCP $(D)$ are 
$14365 \times 88651$, $3559 \times 67204 $, $14365 \times 102757$ and $481 \times 61105$, respectively.
It is faster to solve S-SDP $(D)$ and S-SOCP $(D)$, which include
 relatively small-sized $\mathcal{A}$, 
% lead to short computation time
than F-SDP $(D)$ and F-SOCP $(D)$.  %Moreover, 
%As %dealing
%second-order cone constraints  in %the standard form of 
%$(D)$  requires less computational time than the positive semidefinite constraint 
%in computation,
We also see that F-SOCP $(D)$  consumes less computational time 
than F-SDP $(D)$, despite larger-sized $\mathcal{A}$ in  F-SOCP $(D)$  than that of F-SDP $(D)$.

%=======table3 Dual standard form results
\begin{table}[tbp]
	\caption{Computational time (in seconds) for the dual $(D)$}
\label{table:3}
	\begin{adjustwidth}{-.5in}{-.5in}
	\centering
	\footnotesize
	\resizebox{\textwidth}{!}{
	\begin{tabular}{|c|c|c|c|c|c|}
	\hline	
	Number of &Number of &\multicolumn{4}{c|}{CPU time (s)} \\
	\cline{3-6}
	 constraints $(m)$ &variables $(n = n_L^2)$ & F-SDP $(D)$ & S-SDP $(D)$ & F-SOCP $(D)$ &  S-SOCP $(D)$ \\
	\hline
	9000 & $64^2$ & 39.01 & 12.07 & 32.37 & 2.30 \\
	\hline
	10000 & $81^2$ & 179.98 & 21.77 & 80.60 & 3.70 \\ 
	\hline
	15000 & $100^2$ & 724.58 & 38.03 & 185.11 & 7.32  \\
	\hline
	20000 & $121^2$ & 2373.0 & 83.77 & 484.28 & 14.89  \\
	\hline
	30000 & $144^2$ & 6515.9 & 244.11 & 1323.8 & 31.99 \\
	\hline
	60000 & $169^2$ & 19868.23 & 537.48 & 3436.7 & 76.50  \\
	\hline
	\end{tabular}
	}
	\end{adjustwidth}
\end{table}
%==========================================
	
	%====Table 5 {Full SDP(P) Sparse SOCP(D)}========================
\begin{table}[tbp]
\caption{Numerical comparison between F-SDP $(P)$ and S-SOCP $(D)$}
\label{table:5}
	\begin{adjustwidth}{-.5in}{-.5in}
	\centering
	\scriptsize
	\resizebox{\textwidth}{!}{
	\begin{tabular}{|c|c|c|c|c|c|}
	\hline	
	Number of &Number of &\multicolumn{2}{c|}{CPU time (s)} &\multicolumn{2}{c|}{Objective value} \\
	\cline{3-6}
	 constraints ($m$) &variables ($n = n_L^2$) & F-SDP $(P)$ & S-SOCP $(D)$ & F-SDP $(P)$ & S-SOCP $(D)$ \\
	\hline
	3000 & $81^2$ & 120.60 & 1.32 & -363.22 & -363.22 \\
	\hline
	5000 & $100^2$ & 591.40 & 2.63 & -616.87 & -616.87 \\
	\hline
	5000 & $121^2$ & 663.15 & 3.79 & -2318.78 & -2318.78 \\
	\hline
	10000 & $144^2$ & 5029.6 & 9.15 & -1091.03 & -1091.03 \\
	\hline
	10000 & $169^2$ & 5412.5 & 11.81 & -2901.12 & -2901.12 \\
	\hline
	15000 & $196^2$ & 19033 & 23.45 & -4204.64 & -4204.64 \\
	\hline
	\end{tabular}
	}
\end{adjustwidth}
\end{table}

%============================================
Finally, we discuss which  relaxation method for the lattice QCQP is the most efficient.
From Tables~\ref{table:2} and \ref{table:3},  we observe that
F-SDP $(P)$ and S-SOCP $(D)$ outperform other methods in $(P)$ and $(D)$, respectively. 
%thus
%we  provide
Table~\ref{table:5} summarizes the computational time to  compare F-SDP $(P)$  and S-SOCP $(D)$.
% in the viewpoint of computation time.
In all  cases, S-SOCP $(D)$ is remarkably faster than F-SDP $(P)$.
%  
%In the standard form of $(D)$,
%Since we impose the quadratic constraints on $\mathcal{A}$, not on the variables as in $(P)$
%in the standard form of  $(D)$, %This results in
%the computational time for $(D)$ is much faster.
%On the other hand, the quadratic constraint construction on $\mathcal{A}$ makes the size of $\mathcal{A}$ in F-SOCP $(D)$ %becomes
%larger  than that of $\mathcal{A}$ in  F-SDP $(P)$. 
%Therefore, a considerable size reduction of $\mathcal{A}$ in S-SOCP $(D)$ 
%further decreases the computational time.  %  further.

We have not included the computational time for the completion procedure for
S-SDP  in all tables. As discussed in Section~\ref{sec:mc-socp}, the SOCP relaxation \eqref{SOCPsparsity} does not require a
completion procedure, 
but it  can generate the optimal solution of the lattice QCQP \eqref{latticeQCQP}.
	
%======Pooling================
\subsection{The pooling problem}
	
In this subsection, we present numerical results on the relaxation problems from the pooling problem studied in
Kimizuka et al.~\cite{kimizuka2018solving}. The pooling problem with time discretization is a mixed-integer nonconvex QCQP, and it is shown as NP-hard \cite{alfaki2012models}. Kimizuka et al. \cite{kimizuka2018solving} solved
the SDP, SOCP and LP relaxation problems for the QCQP
  and applied a rescheduling method to the solution obtained from the relaxation problem to generate a good approximate
   solution for the pooling problem. %In the discussion of \cite{kimizuka2018solving},
They used the fact that all the diagonal elements of the data matrices $\P_0, \ldots, \P_m$ are always zeros 
in  the QCQP \eqref{QCQP1} obtained from the pooling problem, and proved that 
the SDP, SOCP and LP relaxations of the QCQP provided the same optimal value.
In \cite{kimizuka2018solving}, they generated the SOCP and LP relaxation problems 
using SPOTless \cite{SPOTless2014}. 
%which %they did not exploit 
%does not exploit any  sparsity. % when they derived relaxation problems.

% Yamashita memo: At above, we mentioned that we used Mosek due to memory consumption. This paragraph is incositent, so I removed.
% When solving the relaxations of the pooling problem,  numerical problem frequently % occurred if other softwares such as
% SeDuMi was used.  We used  SPOTless for numerical stability.
	
For our numerical experiments, we used the 10 test instances
from \cite{kimizuka2018solving}, and 
the description of each test instance  can be found in \cite{kimizuka2018solving}.
In Table~\ref{table:7}, we compare CPU time 
consumed by F-SDP $(D)$, S-SDP $(D)$, F-SOCP $(D)$, and S-SOCP $(D)$.
%For the comparison the results on the pooling problem, we did not employ
%did not have the implementation on $(D)$.
F-SDP $(D)$ and F-SOCP $(D)$ correspond to the SDP and SOCP relaxation problems used in \cite{kimizuka2018solving}.
%We used SparseCoLO to derive 
S-SDP $(D)$ was obtained using SparseCoLO, while %we formulated 
S-SOCP $(D)$ was formulated using the aggregate sparsity based on \eqref{SOCPsparsity}. 
The primal standard form $(P)$ of the pooling problem is not compared, as SPOTless~\cite{SPOTless2014} always
generate full SDP  or SOCP relaxations in $(P)$,
and extracting the original  aggregate sparsity from the resulting SDP or SOCP
problems was difficult.

 %that was employed in \cite{kimizuka2018solving} provides only the relaxation problems in the standard form $(P)$. 

In Table~\ref{table:7}, we first observe from the results on Instances 1 and 3 that
it was much more expensive to solve F-SDP $(D)$  than the other three relaxations.
Furthermore, the other instances could not be solved due to out of memory.
By exploiting the chordal sparsity, S-SDP $(D)$ successfully solved most instances except for  the largest two instances
where out of memory occurred.
%but it also encountered out of memory for the largest two instances.
Compared to the SDP relaxations, %the computation demand in 
the SOCP relaxations can be solved very efficiently. %was an advantage. 
%W the sparsity, 
F-SOCP $(D)$, which does not exploit any sparsity,
is competitive with % or slightly faster 
S-SDP $(D)$ in terms of computational time.
In addition, F-SOCP $(D)$ did not take much memory to solve Instances 9 and 10.% the difficulty in the memory consumption.

For the performance of S-SOCP $(D)$, 
we see that S-SOCP $(D)$ is 3-6 times faster than  F-SOCP $(D)$. In particular, the number of second-order cones  for Instance 10 in F-SOCP $(D)$ was 1306536, while S-SOCP $(D)$ had only 70464 second-order cones.
This difference resulted in much shorter CPU time for S-SOCP (D).
%The speed of S-SOCP $(P)$ is enough in the approach of Kimizuka et al.~\cite{kimizuka2018solving}, 
%since the rescheduling method that follows the relaxation problems will be the main computation bottleneck.
	
	% ======Table 7 {Full SDP, Sparse SDP, Sparse SOCP, LP}
\begin{table}[tbp]
	\caption{Numerical results on the pooling problem (OOM stands for out of memory.)}
\label{table:7}
	\begin{adjustwidth}{-.5in}{-.5in}
	\centering
	\resizebox{\textwidth}{!}{
	\Large
	\begin{tabular}{|c|c|c|c|c|c|c|c|c|c|}
	\hline
&Number of  &\multicolumn{4}{c|}{CPU time (s)}  \\
	\cline{3-6}
	Instance &variables ($n$) & F-SDP $(D)$ & S-SDP $(D)$ & F-SOCP $(D)$ & S-SOCP $(D)$  \\
	\hline	
	1 & $138$ & 768.24 & 0.82 & 0.68 & 0.51 \\ \hline
	2 & $298$ & OOM & 1.00 & 1.33& 0.62 \\ \hline
	3 & $176$ & 5024.0 & 1.15 & 0.85 & 0.68 \\ \hline
	4 & $356$ & OOM & 1.67 & 1.96 & 0.60 \\ \hline
	5 & $222$ & OOM & 2.22 & 1.01 & 0.57 \\ \hline
	6 & $446$ & OOM & 4.01 & 2.80 & 0.60 \\ \hline
	7 & $1228$ & OOM & 19.98 & 21.66 & 0.86 \\ \hline
	8 & $1228$ & OOM & 21.83 & 21.02 & 1.04 \\ \hline
	9 & $1334$ & OOM & OOM & 28.44 & 4.84 \\ \hline
	10 & $1616$ & OOM & OOM & 40.90 & 6.53 \\ \hline 
	\end{tabular}
	}
	\end{adjustwidth}
\end{table}

\section{Conclusion}
\label{sec:conclusion}
We have presented a method to exploit
the aggregate sparsity in the SOCP relaxation. From the numerical experiments in Section 4, we
have observed that the proposed
approach is very efficient in solving the SOCP relaxation. % in a short time.
In addition, the proposed method can obtain the optimal solution with the simple matrix completion that 
attains the maximum determinant in the SOCP relaxation, as in the matrix completion for the SDP relaxation. 

\begin{comment}    
Numerical results have been provided to support the theory of the exploitation of the SOCP relaxation\eqref{SOCPrelax}  and to illustrate the efficiency in solving the lattice and pooling problem. In both large QCQPs, by our proposed method, required CPU time for solving the SOCP relaxation\eqref{SOCPrelax} has been significantly reduced. In particular, exploiting the  sparsity of the SOCP relaxation in dual standard form is efficient than other methods when solving the lattice problem. Moreover, our method allows us to exploit aggregate sparsity of the SOCP relaxation\eqref{SOCPrelax} and acquire approximate optimal values of the pooling problem\eqref{poolingQCQP}. These numerical results also indicate that our proposed method is more valuable when we solve the problems that have same optimal values with the SDP relaxation and the SOCP relaxation.
\end{comment}

For future work, exploiting sparsity in the SDSOS relaxation~\cite{ahmadi2014dsos} can be considered
to obtain an approximate solution of polynomial optimization problems fast. 
Since the rows and columns of matrices involved in the relaxation problems for polynomial optimization problems usually have some structure,  the proposed approach in this paper can be applied to improve the computational efficiency. 
In addition, the hierarchy that employs
 SOCP for polynomial optimization problems in \cite{kuang2019alternative} will be also our interest.

%============Reference=======
% \bibliographystyle{ieeetr}

\ifx01 % x00 or x01
\bibliographystyle{abbrv}
\begin{spacing}{0.3}
{\small
% \bibliography{sparse_socp}
}
\end{spacing}
\else
\begin{spacing}{0.3}
{\small

}
\end{spacing}

\fi
%\begin{thebibliography}{10}

\end{document}